\newtheorem*{theorem*}{Theorem}
\newtheorem{theorem}{Theorem}
\newtheorem{lemma}[theorem]{Lemma}
\newtheorem{proposition}[theorem]{Proposition}
\newtheorem{corollary}[theorem]{Corollary}
\theoremstyle{definition}
\newtheorem{definition}[theorem]{Definition}
\theoremstyle{remark}
\newtheorem{remark}[theorem]{Remark}
\newtheorem{question}[theorem]{Question}
\newtheorem{example}[theorem]{Example}
\newtheorem{claim}[theorem]{Claim}
\numberwithin{theorem}{section}
\newcommand\cA{{\mathcal A}}
\newcommand\cC{{\mathcal C}}
\newcommand\cD{{\mathcal D}}
\newcommand\cH{{\mathcal H}}
\newcommand\cK{{\mathcal K}}
\newcommand\cL{{\mathcal L}}
\newcommand\cM{{\mathcal M}}
\newcommand\FF{{\mathbb F}}
\newcommand\KK{{\mathbb K}}
\newcommand\NN{{\mathbb N}}
\newcommand\RR{{\mathbb R}}
\newcommand\ZZ{{\mathbb Z}}
\newcommand\e{{\bf e}}
\DeclareMathOperator{\cl}{cl}
\DeclareMathOperator{\rank}{rank}
\title{The $k$-fold circuit property for matroids}
\author{Bill Jackson\thanks{School of Mathematical Sciences, Queen Mary University of London, E-mail: \texttt{b.jackson@qmul.ac.uk}} \and Anthony Nixon\thanks{School of Mathematical Sciences, Lancaster University, E-mail: \texttt{a.nixon@lancaster.ac.uk}} \and Ben Smith\thanks{School of Mathematical Sciences, Lancaster University, E-mail: \texttt{b.smith9@lancaster.ac.uk} (corresponding author)}}
\date{\today}
\begin{document}

\maketitle

\begin{abstract}
Double circuits were introduced by Lov\'{a}sz in 1980 as a fundamental tool in his derivation of a min-max formula for the size of a maximum matching in linear matroids. 
This formula was extended to all matroids satisfying the so-called `double circuit property' by Dress and Lov\'{a}sz in 1987. 
We extend these notions to $k$-fold circuits for all natural numbers $k$ and 
show, in particular that several 
families of matroids which are known to satisfy the double circuit property, satisfy the $k$-fold circuit property for all natural numbers $k$. 
These families include all pseudomodular matroids (such as full linear, algebraic and transversal matroids) and certain families of count matroids. These results suggest that the $k$-fold circuit property can be used as a measure of how close the lattice of flats of a matroid is to being a modular lattice. 
\end{abstract}

\paragraph{Keywords:} double circuit, principal partition, double circuit property, pseudomodular, count matroid, matroid matching

\paragraph{MSC classes:} 05B35, 06C10, 90C27

\section{Introduction}

Let $\cM=(E,r)$ be a matroid with finite ground set $E$ and rank function $r$. A \emph{circuit} of $\cM$ is a set $C\subseteq E$ such that $r(C)=|C|-1=r(C-e)$ for all $e\in E$. 
Following Lov\'{a}sz \cite{Lov}, we define a \emph{double circuit} of $\cM$ to be a set $D\subseteq E$ such that $r(D)=|D|-2=r(D-e)$ for all $e\in E$. For example, if $\cM$ is the cycle matroid (or graphic matroid) of a graph $G$, then the circuits of $\cM$ are the edge sets of cycles in $G$, and the double circuits are the edge sets of subgraphs of $G$ which are pairs of cycles with at most one vertex in common or \emph{theta graphs}, i.e., graphs consisting of three internally disjoint paths between two vertices. 

In general, a double circuit can contain arbitrarily many circuits and a key insight of Lov\'{a}sz~\cite{Lov} was the concept of the principal partition of a double circuit which exactly describes all of its circuits. More precisely, we can define an equivalence relation $\sim$ on a double circuit $D$ by saying $e\sim f$ if $r(D-e-f)=r(D)-1$. The partition $\{A_1,A_2,\ldots,A_\ell\}$ of $D$ given by the equivalence classes of $\sim$ form the \emph{principal partition} of $D$. It has the property that $C \subset D$ is a circuit of $\cM$ if and only if $C=D-A_i$ for some $1\leq i\leq \ell$.

Double circuits were used
by Lov\'asz~\cite{Lov}  as a tool 
in his celebrated solution of the matroid matching problem for linear matroids.
Subsequently, Dress and Lov\'asz~\cite{DL} extended this solution to all
matroids $M$ that satisfy the \emph{double circuit property}, i.e., 
for every double circuit $D$ in $\cM$, 
\begin{equation}\label{eq:dcpalt}
   r\left(\bigcap_{i=1}^\ell \cl(D\setminus A_i)\right) \geq \ell-2,
\end{equation} 
where $\{A_1,A_2,\dots,A_\ell\}$ is the principal partition of $D$.
In addition, they showed that the double circuit property is satisfied by several families of `full' matroids, including
\emph{full linear matroids}, vector spaces imbued with a matroid structure, and \emph{full algebraic matroids}.
Since every representable (resp. algebraic) matroid can be embedded in a full linear (resp. algebraic) matroid, 
we can efficiently solve the matroid matching problem for these matroids.
Makai~\cite{Mak} extended their results to `full' count matroids, 
i.e. matroids defined on the edge set of a complete multigraph whose independent sets are edge sets of subgraphs satisfying certain sparsity conditions.

A key observation in showing that a full linear matroid has the double circuit property is that its lattice of flats is modular.
This is no longer the case for a full algebraic matroid: instead its lattice of flats satisfies a weaker property called \emph{pseudomodularity} introduced by Bj\"orner and Lov\'asz~\cite{BL}.
Hochst\"attler and Kern~\cite{HK} demonstrated that pseudomodularity is a sufficient condition for a matroid to have the double circuit property.
Moreover, Dress, Hochst\"attler and Kern~\cite{DHK} related pseudomodularity to the problem of finding modular sublattices within the lattice of flats.
In this sense, the double circuit property is closely related to the modularity of a matroid.

\subsection{Main results}

In this paper we generalise the concept of double circuits in a matroid $\cM$ to \emph{$k$-fold circuits}, i.e. sets $D\subseteq E$ such that $r(D)=|D|-k=r(D-e)$ for all $e\in D$, for some fixed integer $k\geq 0$.
Observe that every cyclic set (i.e., a union of circuits) $X$ in $\cM$ is a $k$-fold circuit  when we take $k=|X|-r(X)$. 
The cyclic sets of $\cM$ form a graded lattice whose $k$-th layer consists of the $k$-fold circuits of $M$.
This lattice has already been identified and studied by Tutte~\cite{Tutte} in the 1960's. We will see in Section \ref{sec:prelim} that the lattice of cyclic sets is dual to the well known lattice of flats of $\cM$ so our results on $k$-fold circuits give new insights into this important lattice.

Utilising this lattice structure, we show that the notion of the principal partition of a double circuit can be extended to a $k$-fold circuit $D$ of $\cM$.
Explicitly, we define it to be the partition of  $D$ given by the complements of the $(k-1)$-fold circuits which are contained in $D$.
This principal partition will be a fundamental object for our analysis of $k$-fold circuits.
We also give an alternative characterisation of a connected $k$-fold circuit as a set with an ear decomposition into precisely $k$ ears (Proposition~\ref{prop:kcircuitears}).

Our main results concern a generalisation of the double circuit property which we call the $k$-fold circuit property.
We show that every $k$-fold circuit $D$ with principal partition $\{A_1, \dots, A_\ell\}$ satisfies the inequality
 $$  r\left(\bigcap_{i=1}^\ell \cl(D\setminus A_i)\right) \leq \ell-k$$
and characterise when equality holds (Theorem~\ref{thm:strong}).
We say a $k$-fold circuit $D$ is \emph{balanced} if 
this inequality holds with equality.
Moreover, we say a matroid has the \emph{$k$-fold circuit property} if all of its $k$-fold circuits are balanced.
To see that this is a generalisation of the double circuit property to $k$-fold circuits, observe that 
the above inequality
implies that \eqref{eq:dcpalt} must hold \emph{with equality} in all matroids which have the double circuit property.

We show that full linear matroids have the $k$-fold circuit property for all $k \geq 1$ (Corollary \ref{cor:fulllinear}).
This motivates the $k$-fold circuit property as a measure of how close the lattice of flats of a matroid is to being a modular lattice that is more refined than the double circuit property.
We demonstrate that the family of sparse paving matroids has the $k$-fold circuit property for all $k \geq 3$ (Proposition \ref{prop:sparse+paving}) but may fail the double circuit property. In addition, 
for each $m \geq 2$, we give a family of matroids that do not have the $k$-fold circuit property for all $2 \leq k\leq m$ (Proposition \ref{prop:kfold+failure}).

Showing that classes of matroids satisfy the $k$-fold circuit property requires more subtlety when $k>2$, partly due to the connectivity properties of $k$-fold circuits.
We say a $k$-fold circuit $D$ of $\cM$ is \emph{trivial} if $\cM|_D$ has $k$ connected components and each is a circuit of $\cM$. Double circuits have the property that they are either connected or trivial, the latter of which is very easy to analyse. When $k>2$, there exists non-trivial disconnected $k$-fold circuits, making the analysis much trickier.
To help deal with this, we show that the direct sum of balanced $k_i$-fold circuits is also balanced (Theorem~\ref{thm:disconalt}).

The remainder of the article is dedicated to identifying classes of matroids that satisfy the $k$-fold circuit property for all $k \geq 1$.
We extend Hochst\"attler and Kern's result to show that pseudomodular matroids satisfy the $k$-fold circuit property (Theorem~\ref{thm:pseudomodular+kCP}).
We utilise similar lattice theoretic techniques to~\cite{DHK}, giving local conditions in pseudomodular lattices for finding modular sublattices (Theorem~\ref{thm:modular+substructure}).
This shows full algebraic matroids have the $k$-fold circuit property as a corollary.

We next turn our attention to full count matroids
i.e.~count matroids $\cM(a,b)$ whose underlying graph is a complete multigraph in which each vertex is incident to $\max\{a-b,0\}$ loops, each pair of vertices are joined by $2a-b$ parallel edges and $0\leq b\leq 2a-1$.
We first show that a $k$-fold circuit of $\cM(a,b)$ is balanced if each of its $(k-1)$-fold circuits are sufficiently dense (Proposition~\ref{prop:balanced+Mab}).
This generalises Makai's result \cite{Mak} that full count matroids satisfy the double circuit property,
but does not imply that full count matroids have the $k$-fold circuit property since the
density hypothesis is not satisfied by all their $k$-fold circuits when $k>2$.
Instead we use an alternative approach to establish that the $k$-fold circuit property does hold for the family of full count matroids $\cM(a,b)$ when $1 \leq b \leq a$ (Theorem~\ref{thm:flats}).

The paper is organised as follows. In Section \ref{sec:prelim} we review terminology and preliminary results from matroid theory, 
count matroids and double circuits.
In Section \ref{sec:higher} we define $k$-fold circuits 
and derive their fundamental properties.
In Section \ref{sec:hcp} we generalise the double circuit property of Dress and Lov\'{a}sz to the $k$-fold circuit property.
In Section~\ref{sec:pseudo}, we show that all pseudomodular matroids have the $k$-fold circuit property.
In Section \ref{sec:countM}, we focus on the $k$-fold circuit property for count matroids.
We conclude with brief remarks on possible future directions in Section \ref{sec:conclusion}.

\section{Preliminaries}
\label{sec:prelim}

We will introduce the relevant terminology and preliminary results for matroid theory, 
count matroids 
and double circuits.

\subsection{Matroids and lattices}

We assume some familiarity with the basic notions of matroid theory; 
we refer the reader to \cite{Oxl,Aigner} for basic definitions and concepts.

We briefly recall some fundamentals of lattice theory.
A \emph{lattice} $(\cL, <)$ is a poset such that, for every pair of elements $x$ and $y$ in $\cL$, the least upper bound $x \vee y$ and the greatest lower bound $x \wedge y$ exist.
As such, the join $\vee$ and the meet $\wedge$ are well-defined binary operations on $\cL$.
We shall always assume our lattice has a well-defined minimal element $0 \in \cL$ and maximal element $1 \in \cL$.
A \emph{sublattice} of $\cL$ is a non-empty subset of $\cL$ with the same meet and join operations.

We say \emph{$y$ covers $x$} (and write $x \lessdot y$) if $x < y$ and there exists no $z$ such that $x < z < y$.
The \emph{atoms} of $\cL$ are the elements $a \in \cL$ such that $0 \lessdot a$.
A lattice is \emph{atomic} if every non-zero element can be written as the join of finitely many atoms.

A lattice is \emph{graded} if it is equipped with a map $r\colon \cL \rightarrow \ZZ_{\geq 0}$ satisfying $r(0) = 0$, and if $x \lessdot y$ then $r(y) = r(x) + 1$.
Such a grading sends $x$ to the length of a maximal chain in the interval $[0,x]$.
A graded lattice is \emph{semimodular} if for all $x, y \in \cL$, we have
\begin{equation} \label{eq:semimodular}
r(x) + r(y) \geq r(x \vee y) + r(x \wedge y) \, .    
\end{equation}
We call a pair of elements $(x,y) \in \cL \times \cL$ \emph{modular} if \eqref{eq:semimodular} holds with equality.
We say $\cL$ is \emph{modular} if all pairs of its elements are modular.

A lattice is \emph{geometric} if it is graded, atomic and semimodular.
Every lattice we will encounter will be geometric.
Geometric lattices are directly related to matroids via their \emph{closed sets} or \emph{flats}.
We define the \emph{lattice of flats} $\cL(\cM)$ of a matroid $\cM = (E,r)$ to be $\cL(\cM) = \left\{X \subseteq 2^E \mid \cl(X) = X \right\}$ with partial order  
given by set inclusion.
It is well known that $\cL(\cM)$ is a geometric lattice with meet $X \wedge Y = X \cap Y$ and join $X \vee Y = \cl(X \cup Y)$, and grading given by the matroid rank function $r$.
Conversely, every geometric lattice is isomorphic to the lattice of flats of some matroid~\cite[Theorem 6.1]{Aigner}.

We highlight one more well-known property of flats, see for example \cite[Exercise 1.4.11]{Oxl}, that arises from this lattice theoretic point of view. 
\begin{lemma}\label{lem:covering}
Let $\cM = (E, r)$ be a matroid, $X \in \cL(\cM)$ and
$\{Y_1, \dots, Y_\ell\}$ be the set of flats that cover $X$.
Then $\{Y_1 \setminus X, \dots, Y_\ell \setminus X\}$ is a partition of $E \setminus X$.
\end{lemma}
\noindent
We will refer to this result as the \emph{covering axiom} for flats.
It will be invaluable in our study of $k$-fold circuits.

We next recall a number of ways to decompose and combine matroids.
Given two matroids $\cM_1=(E_1,r_1)$ and $\cM_2=(E_2,r_2)$ with $E_1\cap E_2=\emptyset$, we define their \emph{direct sum} to be the matroid $\cM_1\oplus \cM_2=(E_1\cup E_2,r)$ by putting $r(A)=r_1(A\cap E_1)+r_2(A\cap E_2)$ for all $A\subseteq E_1\cup E_2$.

We can define a relation on the ground set of a matroid $\cM=(E,r)$ by saying that $e,f \in E$ are related if $e=f$ or if there is a circuit $C$ in $\cM$ with $e,f \in C$. It is well-known that this is an equivalence relation.
The equivalence classes are called the \emph{components} of $\cM$.
If $\cM$ has only one component then $\cM$ is said to be \emph{connected}, otherwise 
$\cM$ is said to be \emph{disconnected}.
Moreover, if $\cM$ has  components $E_1, E_2, \dots, E_t$, then 
we can write $\cM$ as $\cM=\cM_1\oplus \cM_2\oplus \cdots \oplus \cM_t$, where $\cM_i=\cM|_{E_i}$ is the matroid restriction of $\cM$ onto $E_i$; see \cite{Oxl}.
We will say that a set $S\subseteq E$ is  \emph{$\cM$-connected} if $\cM|_S$ is a connected matroid. 

We will also consider another way to decompose matroids, namely ear decompositions.
Given a non-empty sequence of circuits $C_1,\dots,C_m$ in a matroid $\cM$, we put $D_i=C_1\cup \cdots \cup C_i$  for all $1\leq i \leq m$ and $\widetilde C_i=C_i \setminus D_{i-1}$ for all $2\leq i \leq m$.
The sequence $C_1,\dots,C_m$ is said to be a \emph{partial ear decomposition} of $\cM$ if, for all $2\leq i \leq m$,
\begin{enumerate}
\item[(E1)] $C_i\cap D_{i-1}\neq \emptyset$,
\item[(E2)] $C_i \setminus D_{i-1} \neq \emptyset$, and
\item[(E3)] no circuit $C_i'$ satisfying (E1) and (E2) has $C_i'\setminus D_{i-1}\subsetneq C_i \setminus D_{i-1}$. 
\end{enumerate}
A partial ear decomposition is an \emph{ear decomposition} of $\cM$ if $D_m=E$.

\begin{lemma}[\cite{C&H}]
\label{lem:ear}
Let $\cM=(E,r)$ be a matroid with $|E|\geq 2$. Then:
\begin{enumerate}
\item $\cM$ is connected if and only if $\cM$ has an ear decomposition.
\item If $\cM$ is connected then every partial ear decomposition is extendible to an ear decomposition.
\item If $C_1,\dots,C_m$ is an ear decomposition of $\cM$ then $r(D_i)-r(D_{i-1})=|\widetilde C_i|-1$ for all $2\leq i \leq m$.
\end{enumerate}
\end{lemma}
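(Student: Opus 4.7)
The plan is to handle part (3) first, since it provides a rank identity that is useful for understanding ear decompositions, then to establish parts (1) and (2) by exploiting matroid connectedness together with a minimality argument.

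For part (3), the key observation is to reformulate conditions (E1)--(E3) in terms of the contraction $\cM/D_{i-1}$. Conditions (E1) and (E2) together say that $C_i$ is a circuit of $\cM$ which meets $D_{i-1}$ but is not contained in it, which is equivalent to asserting that $\widetilde C_i = C_i \setminus D_{i-1}$ is a non-empty dependent set in $\cM/D_{i-1}$. I would then argue that (E3) is precisely the statement that $\widetilde C_i$ is a \emph{circuit} of $\cM/D_{i-1}$: any strictly smaller non-empty dependent subset of $\widetilde C_i$ in $\cM/D_{i-1}$ would contain a circuit of $\cM/D_{i-1}$, which lifts to a circuit $C_i'$ of $\cM$ with $C_i' \setminus D_{i-1} \subsetneq \widetilde C_i$ satisfying (E1) and (E2), contradicting (E3). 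Hence $r_{\cM/D_{i-1}}(\widetilde C_i)=|\widetilde C_i|-1$, and unwinding the definition of contraction yields $r(D_i)-r(D_{i-1})=|\widetilde C_i|-1$.

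For the forward direction of (1), I would prove by induction on $i$ that $\cM|_{D_i}$ is connected. The base case $D_1 = C_1$ is immediate. For the inductive step, fix $e\in \widetilde C_i$ and $f\in D_{i-1}$: by (E1) there exists $g\in C_i\cap D_{i-1}$, so the circuit $C_i$ witnesses that $e$ and $g$ lie in a common circuit of $\cM|_{D_i}$, and by induction $g$ and $f$ lie in the same component of $\cM|_{D_{i-1}}$, which shows $e$ and $f$ are connected in $\cM|_{D_i}$. The reverse direction of (1) follows once (2) is established, by starting from any single circuit as a partial ear decomposition.

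For (2), suppose $\cM$ is connected and $C_1,\dots,C_t$ is a partial ear decomposition with $D_t\subsetneq E$. Connectedness of $\cM$ yields elements $e\in D_t$ and $f\in E\setminus D_t$ lying in a common circuit, giving at least one circuit of $\cM$ satisfying (E1) and (E2). Among all such circuits choose $C_{t+1}$ with $C_{t+1}\setminus D_t$ inclusion-minimal; this choice is possible because $E$ is finite, and by construction $C_{t+1}$ also satisfies (E3). Iterating produces an ear decomposition in finitely many steps. The main obstacle is the equivalence used in (3): verifying that (E3) coincides with $\widetilde C_i$ being a circuit of $\cM/D_{i-1}$ requires the standard bijection between circuits of $\cM/D_{i-1}$ and inclusion-minimal non-empty sets of the form $C\setminus D_{i-1}$ for circuits $C$ of $\cM$ meeting $D_{i-1}$, which must be applied with care to lift back and forth between the two matroids.
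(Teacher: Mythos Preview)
The paper does not give its own proof of this lemma: it is stated with a citation to Coullard and Hellerstein~\cite{C&H} and used as a black box. So there is no in-paper argument to compare against, and your proposal stands on its own.

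Your argument is correct in outline and in its key ideas. The contraction reformulation for part~(3) is the standard route: the only step requiring care is the one you flag at the end, namely that when a circuit $S$ of $\cM/D_{i-1}$ with $S\subsetneq\widetilde C_i$ is lifted to a circuit $C_i'$ of $\cM$, one must check $C_i'\cap D_{i-1}\neq\emptyset$ so that (E1) holds. This follows because otherwise $C_i'=S\subsetneq C_i$ would be a circuit properly contained in the circuit $C_i$, which is impossible. With that detail filled in, part~(3) is complete. Your inductive argument for the forward direction of (1) and the greedy extension for (2) are both correct; for the reverse direction of (1) you should note explicitly that a connected matroid on at least two elements contains a circuit (any two elements lie in one), so the process can be started. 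One minor imprecision: you describe (E1)--(E2) as ``equivalent'' to $\widetilde C_i$ being non-empty and dependent in $\cM/D_{i-1}$, but the dependence of $\widetilde C_i$ in the contraction does not by itself force $C_i\cap D_{i-1}\neq\emptyset$; only the forward implication is needed, and that is what you actually use.
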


A matroid $\cM$ is completely determined by its set of circuits $\cC(\cM)$. We can use this fact to 
define our final two matroid constructions.
Suppose $\cM_1,\cM_2$  are two matroids with ground sets $E_1,E_2$ respectively such that $E_1\cap E_2=\{e\}$ and $e$ is neither a loop nor coloop of $\cM_1$ or $\cM_2$.
The \emph{parallel connection of $\cM_1,\cM_2$ along $e$} is the matroid
$P(\cM_1, \cM_2)$ with ground set $E_1\cup E_2$ and circuits
\begin{align} \label{eq:parallel+connection+circuits}
\cC(P(\cM_1, \cM_2))= \cC(\cM_1)\cup \cC(\cM_2) \cup \{(C_1\cup C_2)-e:e\in C_i\in \cC(\cM_i) \mbox{ for both } i=1,2\}.
\end{align}

\begin{remark}\label{rem:graph+matroid}
We will often consider matroids $\cM$ whose ground set is the edge set of a (multi)graph $G$. In this case we will describe a subgraph $H$  of $G$ by using the properties of its edge set $E(H)$ in $\cM$.
Hence we will refer to $r_\cM(E(H))$ as the {\em rank of $H$ in $\cM$}  and will say that $H$ is {\em $\cM$-independent} or is an {\em $\cM$-circuit} if $E(H)$ is independent or is a circuit in $\cM$. Similarly, we will 
 refer to the subgraph of $G$ induced by  the closure of $E(H)$ in $\cM$ as the {\em $\cM$-closure of $H$} and denote this subgraph by $\cl_\cM(H)$.
We will say that the subgraph  $H$ is {\em $\cM$-connected} if $\cM|_{E(H)}$  is a connected  matroid.
\end{remark}

\subsection{Count matroids}
\label{subsec:countM_intro}

Count matroids are a family of matroids defined on the edge set of a multigraph.
They occur frequently in combinatorial optimisation; see \cite{Fra}. They will be used as a source of examples throughout this paper.

Let $a,b$ be integers with $0\leq b<2a$ and $H = (V,E)$ be a multigraph. 
    The \emph{count matroid} $\cM_{a,b}(H)$ of $H$ is the matroid on $E$ in which a set of edges $F \subseteq E$ is independent if and only if $|F'| \leq a|V(F')| - b$ for all $\emptyset \neq F' \subseteq F$. Three well known examples of count matroids are $\cM_{1,1}(H)$, the \emph{cycle matroid of $H$}, $\cM_{1,0}(H)$, the \emph{bicycle matroid of $H$}, and $\cM_{2,3}(H)$, the \emph{$2$-dimensional rigidity matroid of $H$}.
    
    Note that, if $H$ and $H'$ are graphs and $F \subseteq E(H)\cap E(H')$  then the rank of $F$ will be the same in  $\cM_{a,b}(H)$ and $\cM_{a,b}(H')$.
    As such, we will generally not specify the underlying graph $H$ and assume it is a sufficiently dense
    complete multigraph that contains $F$.
    More precisely, we define the \emph{full count matroid} $\cM(a,b)$ to be the count matroid $\cM_{a,b}(H)$ where $H=K_n^{a,b}$  is a complete multigraph on $n$ vertices in which each vertex is incident with $\max\{a-b,0\}$ loops and all pairs of vertices are joined by $2a-b$ parallel edges. These parameters are chosen to ensure that $\cM(a,b)$ does not contain the trivial circuits consisting of a vertex with $a-b+1$ loops or a pair of vertices joined by $2a-b+1$ parallel edges.
    We will refer to such a complete multigraph as an \emph{ $(a,b)$-clique}.
    We will denote the rank function of $\cM(a,b)$ by $r_{a,b}$, and say that a graph $G\subseteq H$ is \emph{$\cM(a,b)$-rigid} if $r_{a,b}(G) = a|V(G)| - b$.
    Note that the $(a,b)$-clique on the same vertex set as $G$ also has rank $a|V(G)| - b$, so the $\cM(a,b)$-rigid graphs on a given vertex set have maximal possible rank.
    In addition, all induced subgraphs of $K_n^{a,b}$ 
    with at least two vertices 
    are $\cM(a,b)$-rigid. This statement remains true for induced subgraphs with one vertex when $0\leq b\leq a$.
    
We will need the following result which describes some elementary, and well known, properties of count matroids. 
In the lemma, and elsewhere in this paper, we take the \emph{degree} of a vertex $v$ in a multigraph to be the number of edges incident to $v$, counting each loop once only.

\begin{lemma} \label{lem:abclique} Let 
 $a,b,n$ be integers with
$0\leq b\leq 2a-1$ and $G=(V,E)\subseteq K_n^{a,b}$.
\begin{enumerate}
\item[(a)] Suppose $G-v$ is $\cM(a,b)$-independent for some vertex $v\in V$ of degree at most $a$. Then $G$ is  $\cM(a,b)$-independent.
\item[(b)] Suppose $G$ is an $\cM(a,b)$-circuit. Then $G$ 
is an $\cM(a,b)$-rigid connected graph of 
minimum degree at least $a+1$.
\item[(c)] Suppose $G$ is $\cM(a,b)$-rigid. Then the closure of $G$  in $\cM(a,b)$ is an $(a,b)$-clique on $V$.   
\end{enumerate}
\end{lemma}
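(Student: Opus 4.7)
The plan is to handle the three parts in order, using (a) as an input to (b).

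For part (a), I would verify the sparsity condition $|F|\le a|V(F)|-b$ directly for every non-empty $F\subseteq E(G)$. Write $F=F'\sqcup F''$, where $F''$ consists of the edges of $F$ incident to $v$ and $F'=F\setminus F''$; note $|F''|\le d_G(v)\le a$. If $F''=\emptyset$, then $F\subseteq E(G-v)$ is independent and the bound is immediate. If $F''\neq\emptyset$ and $F'\neq\emptyset$, then $v\in V(F)\setminus V(F')$ so $|V(F)|\ge |V(F')|+1$, and
\[
|F|=|F'|+|F''|\le \bigl(a|V(F')|-b\bigr)+a\le a|V(F)|-b.
\]
The remaining case $F'=\emptyset\neq F''$ requires a short case analysis on $|V(F)|$, using that $K_n^{a,b}$ has at most $\max\{a-b,0\}$ loops per vertex and at most $2a-b$ parallel edges per pair of vertices to cover the regime $b>a$ where the pure degree bound is not enough.

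For part (b), I would first prove rigidity. Since $G$ is a circuit, every proper subset $F\subsetneq E(G)$ is independent and satisfies $|F|\le a|V(F)|-b$, so the inequality certifying dependence of $G$ must be at $F=E(G)$, giving $|E(G)|\ge a|V(G)|-b+1$. Combining with $|E(G)|=r_{a,b}(G)+1$ (circuits are minimally dependent) and $r_{a,b}(G)\le r_{a,b}(K_{V(G)}^{a,b})=a|V(G)|-b$ yields $r_{a,b}(G)=a|V(G)|-b$. For the minimum degree bound, if some $v\in V(G)$ had $d_G(v)\le a$, then $E(G-v)\subsetneq E(G)$ would be independent by circuit minimality, and (a) would force $G$ itself to be independent, contradicting that $G$ is a circuit. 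For graph-connectivity, I would observe that a partition $V(G)=V(G_1)\sqcup V(G_2)$ into non-empty sides with $E(G)=E(G_1)\sqcup E(G_2)$ decouples the sparsity conditions additively, so $\cM(a,b)|_{E(G)}\cong \cM(a,b)|_{E(G_1)}\oplus \cM(a,b)|_{E(G_2)}$; but no circuit straddles two summands of a direct sum.

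For part (c), set $H=K_{V(G)}^{a,b}$. Since $G\subseteq H$ and $r_{a,b}(G)=a|V(G)|-b=r_{a,b}(H)$, every edge of $H$ lies in $\cl(G)$. Conversely, for any edge $e\in E(K_n^{a,b})\setminus E(H)$, at least one endpoint of $e$ lies outside $V(G)$. Taking a basis $B\subseteq E(G)$, I would verify that $B+e$ is independent by checking that every non-empty $F\subseteq B+e$ containing $e$ satisfies $|V(F)|\ge |V(F\setminus e)|+1$, so
\[
|F|=|F\setminus e|+1\le a|V(F\setminus e)|-b+1\le a|V(F)|-a-b+1\le a|V(F)|-b
\]
(using $a\ge 1$), while subsets avoiding $e$ are already independent since they lie in $B$. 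Hence $r_{a,b}(G+e)>r_{a,b}(G)$, so $e\notin \cl(G)$.

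The main technical nuisance is the small-subgraph case analysis in (a), especially when $b>a$ so that $a|V(F)|-b$ can drop below the degree bound $a$ on $|F|$: resolving this requires the explicit caps on loops and parallel edges in $K_n^{a,b}$ rather than the generic sparsity inequality. Beyond this, the reusable ingredient needed throughout is the standard computation that an $(a,b)$-clique on $n$ vertices has rank exactly $an-b$ in the non-degenerate range, and this underpins both the rigidity calculation in (b) and the closure identification in (c).
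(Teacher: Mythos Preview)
Your plan for (a) and (b) is essentially the paper's argument in direct rather than contrapositive form; the paper assumes $G$ is dependent, picks an overfull $G'$, and shows $G'$ or $G'-v$ witnesses dependence of $G-v$. Your edge case $F'=\emptyset\neq F''$ is exactly the case $E(G'-v)=\emptyset$ in the paper's formulation, and both proofs need the loop/parallel caps of $K_n^{a,b}$ there, though the paper leaves this implicit. For connectivity in (b), the paper uses a one-line edge count ($|E_1|+|E_2|\le a|V|-2b<|E|$) rather than your direct-sum argument; both are fine.

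The real difference is in (c). The paper does not redo a sparsity check: it observes that if $e\in\cl(G)$ has an endpoint $w\notin V$, then $e$ lies in a circuit $C\subseteq E(G)+e$, and by (b) the graph of $C$ has minimum degree at least $a+1\ge 2$, while $w$ is incident only to $e$ in $C$. This reuses (b) and avoids your inequality chain entirely.

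Your direct approach to (c) also works, but has a small gap as written: the step $|F\setminus e|\le a|V(F\setminus e)|-b$ fails when $F=\{e\}$ and $b\ge 1$, since then the right side is $-b<0$. You need to treat $F=\{e\}$ separately (a single non-loop edge satisfies $1\le 2a-b$, and a loop exists in $K_n^{a,b}$ only when $b<a$, in which case $1\le a-b$). Once patched, your argument is correct, though the paper's route via (b) is shorter.
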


\begin{proof}
(a) If $G=(V,E)$ is $\cM(a,b)$-dependent then $G$ will contain a subgraph $G'=(V',E')$ with $|E'|>a|V'|-b$. Then either $G'$, when $v\not\in V'$, or $G'-v$, when $v\in V'$,
contradicts the hypothesis that $G-v$ is  $\cM(a,b)$-independent.

(b) Since $G$ is an $\cM(a,b)$-circuit, $G-e$ is $\cM(a,b)$-independent and hence $|E-e|\leq a|V|-b$ for all $e \in E$.
    This gives $|E|=a|V|-b+1$, otherwise $G$ would be $\cM(a,b)$-independent. Hence $r_{a,b}(G) = |E|-1=a|V| - b$ and $G$ is $\cM(a,b)$-rigid. 
    
    To see that $G$ is connected, we suppose for a contradiction that $G=G_1\cup G_2$ where $G_i=(V_i,E_i)$ and $V_i\cap V_2=\emptyset$. Then $|E_i|\leq a|V_i|-b$ for both $i=1,2$ and hence $|E|=|E_1|+|E_2|\leq a|V|-2b$. This contradicts the fact that $|E|=a|V|-b+1$ by the previous paragraph.

The fact that $G$ has minimum degree at least $a+1$ follows immediately from (a).

(c) Let $K$ denote the edge set of the $(a,b)$-clique on $V$. For each $e\in K\setminus E$, the hypothesis that $G$ is $\cM(a,b)$-rigid implies that $r_{a,b}(G \cup e)=r_{a,b}(G)$ and hence $e$ is contained in the $\cM(a,b)$-closure of $G$. In addition, if $f$ is an edge of $\cM(a,b)$ whose end-vertices do not lie in $V$, then $f$ does not belong to the $\cM(a,b)$-closure of $G$ since every $\cM(a,b)$-circuit has minimum degree at least two by (b).  
\end{proof}

\subsection{Double circuits}
\label{subsec:doublec}

Lov\'asz \cite{Lov} introduced the notion of a double circuit in his solution to the matroid matching problem. We will review double circuits before extending this concept to that of a $k$-fold circuit in Section~\ref{sec:higher}.
Many of the properties of double circuits will be stated without proof: these can either be found in \cite{Lov, DL} or will be generalised to $k$-fold circuits in Section~\ref{sec:higher}.

Recall that a \emph{cyclic set} of a matroid $\cM=(E,r)$ is a subset $D \subseteq E$ which is the union of circuits of $\cM$ (equivalently  $r(D-e)=r(D)$ for all $e\in D$).
Thus $D$ is a  {double circuit} if $D$ is a cyclic set  with $r(D) = |D| -2$.
Let $\{A_1,A_2,\ldots,A_\ell\}$ be the principal partition of $D$.
Then $C \subset D$ is a circuit of $\cM$ if and only if $C=D-A_i$ for some $1\leq i\leq \ell$. A double circuit $D$ is said to be \emph{trivial} if $\ell=2$ and \emph{non-trivial} otherwise.
Note that $\cM|_ D$ is connected if and only $D$  is non-trivial.

As noted in the introduction,  the double circuits in the cycle matroid $\cM(1,1)$ are the subsets of the edge set of $K_n$ which induce pairs of edge-disjoint cycles or theta graphs. In the first case, the principal partition has exactly two parts and hence is trivial. In the latter, the principal partition is given by the partition of the theta graph into three internally disjoint paths and hence $\ell =3$. We next illustrate that the double circuits of the count matroid $\cM(2,3)$ have a richer structure. 

\begin{example}\label{ex:dc}
Figure \ref{fig:2double} provides four examples of double circuits in $\cM(2,3)$.
The double circuits in (a) and (b) are trivial. They both have exactly two $\cM(2,3)$-circuits, each isomorphic to $K_4$.
Their corresponding principal partitions into two copies of $K_4$ are immediate.
The double circuit $D$ in (c) has exactly three $\cM(2,3)$-circuits, two copies of $K_4$ and the graph obtained by deleting the edge $v_1v_2$ from $D$.
Hence the principal partition has three parts given by the complements of these three $\cM(2,3)$-circuits.
In (d), there are seven distinct $\cM(2,3)$-circuits, given by the unique copy of $K_4$ and the six (spanning) $\cM(2,3)$-circuits obtained by deleting exactly one edge from this $K_4$. Hence the principal partition is $\{A_1,A_2,\dots,A_7\}$ where $A_1$ is equal to the set of five edges incident to $u_1$ and $u_2$ and $A_2, A_3,\dots,A_7$ are singleton sets corresponding to the edges in the $K_4$.
\end{example}

\begin{figure}[ht]
\begin{center}
\begin{tikzpicture}[scale=0.6]

\filldraw (0,0) circle (3pt)node[anchor=north]{};
\filldraw (2,0) circle (3pt)node[anchor=north]{};
\filldraw (0,2) circle (3pt)node[anchor=north]{};
\filldraw (2,2) circle (3pt)node[anchor=north]{};

\filldraw (3,0) circle (3pt)node[anchor=north]{};
\filldraw (5,0) circle (3pt)node[anchor=north]{};
\filldraw (3,2) circle (3pt)node[anchor=north]{};
\filldraw (5,2) circle (3pt)node[anchor=north]{};

\draw[black,thick]
(0,0) --(2,0) -- (0,2) -- (2,2) -- (0,0) -- (0,2);

\draw[black,thick]
(2,0) -- (2,2);

\draw[black,thick]
(3,0) --(5,0) -- (3,2) -- (5,2) -- (3,0) -- (3,2);

\draw[black,thick]
(5,0) -- (5,2);

\filldraw (2.5,-1) circle (0pt)node[anchor=north]{(a)};


\filldraw (9.5,0) circle (3pt)node[anchor=north]{$w$};
\filldraw (7.5,0) circle (3pt)node[anchor=north]{};
\filldraw (11.5,0) circle (3pt)node[anchor=north]{};
\filldraw (7,2) circle (3pt)node[anchor=north]{};
\filldraw (9,2) circle (3pt)node[anchor=north]{};
\filldraw (10,2) circle (3pt)node[anchor=north]{};
\filldraw (12,2) circle (3pt)node[anchor=north]{};

\draw[black,thick]
(7,2) -- (9.5,0) -- (7.5,0) -- (7,2) -- (9,2) -- (9.5,0) -- (12,2) -- (11.5,0) -- (9.5,0) -- (10,2) -- (12,2);

\draw[black,thick]
(7.5,0) -- (9,2);

\draw[black,thick]
(10,2) -- (11.5,0);

\filldraw (9.5,-1) circle (0pt)node[anchor=north]{(b)};

\filldraw (16.5,0) circle (3pt)node[anchor=north]{$v_1$};
\filldraw (16.5,2) circle (3pt)node[anchor=south]{$v_2$};
\filldraw (14.5,0) circle (3pt)node[anchor=north]{};
\filldraw (14.5,2) circle (3pt)node[anchor=north]{};
\filldraw (18.5,0) circle (3pt)node[anchor=north]{};
\filldraw (18.5,2) circle (3pt)node[anchor=north]{};

\draw[black,thick]
(14.5,0) -- (16.5,0) -- (18.5,0) -- (18.5,2) -- (16.5,2) -- (14.5,2) -- (14.5,0) -- (16.5,2) -- (18.5,0);

\draw[black,thick]
(14.5,2) -- (16.5,0) -- (18.5,2);

\draw[black,thick]
(16.5,0) -- (16.5,2);

\filldraw (16.5,-1) circle (0pt)node[anchor=north]{(c)};

\filldraw (22.5,0) circle (3pt)node[anchor=north]{};
\filldraw (22.5,2) circle (3pt)node[anchor=north]{};
\filldraw (24.5,0) circle (3pt)node[anchor=north]{};
\filldraw (24.5,2) circle (3pt)node[anchor=north]{};
\filldraw (21.5,1) circle (3pt)node[anchor=east]{$u_1$};
\filldraw (25.5,1) circle (3pt)node[anchor=west]{$u_2$};

\draw[black,thick]
(22.5,0) -- (22.5,2) -- (24.5,0) -- (24.5,2) -- (22.5,0) -- (21.5,1) -- (22.5,2) -- (24.5,2) -- (25.5,1) -- (24.5,0) -- (22.5,0);

\draw[black,thick]
(21.5,1) -- (25.5,1);

\filldraw (23.5,-1) circle (0pt)node[anchor=north]{(d)};
\end{tikzpicture}
\end{center}
\caption{Four examples of double circuits in $\cM(2,3)$.}
\label{fig:2double}
\end{figure}
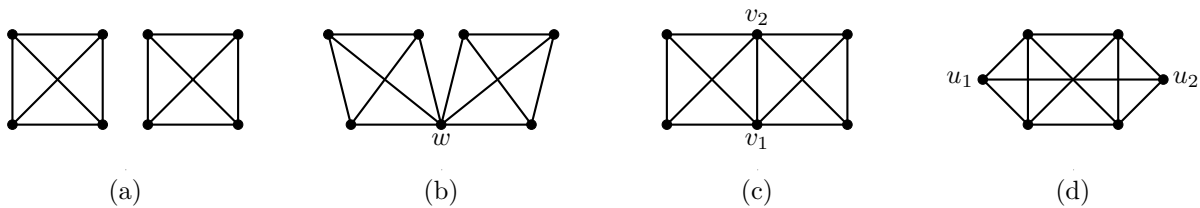

We will see that  a set $D$ is a non-trivial double circuit in a matroid $\cM$ if and only if $\cM|_D$ has an ear decomposition with precisely two ears.
We will formally prove this statement along with a more general statement for $k$-fold circuits in Proposition~\ref{prop:kcircuitears}.

\section{\boldmath $k$-fold circuits}
\label{sec:higher}

Double circuits can be naturally generalised to triple circuits and beyond: 

\begin{definition}
    Let $\cM = (E,r)$ be a matroid 
    and $k \in \NN$ a nonnegative integer.
    A \emph{$k$-fold circuit}   of $\cM$ is a cyclic set $D \subseteq E$ with $r(D) = |D| - k$.
\end{definition}

Note that the empty set is the unique 0-fold circuit in any matroid $\cM$.

\begin{example} If $\cM$ is the cycle matroid of a complete graph  $K_n$ then the $k$-fold circuits of $\cM$ are the edge sets of the bridgeless subgraphs $G=(V,D)$ with   $c$ connected components which satisfy $k=|D|-|V|+c$. 
This holds since the rank of $D$ is $|V|-c$ (the size of a maximal forest in $G$) and the bridgeless condition ensures that every edge in $D$ is contained in at least one cycle of $G$.
\end{example}

There is a natural bijection between the $k$-fold circuits in a matroid and the flats of its dual matroid of corank $k$.
This is hinted at in the proof of \cite[Lemma 1.2]{DL}, but we make the relationship explicit in our next result.

\begin{lemma}\label{lem:k-circuit+coflat}
    Let $\cM = (E,r)$ be a matroid whose dual matroid $\cM^*$ has rank $t$. Then $D \subseteq E$ is a $k$-fold circuit of $\cM$ if and only if $E \setminus D$ is a flat of rank $t - k$ of $\cM^*$.
\end{lemma}

\begin{proof}
    It is well known that $C$ is a circuit in $\cM$ if and only if $E \setminus C$ is a hyperplane of $\cM^*$~\cite[Proposition 2.1.6]{Oxl}.
    It immediate follows that $D$ is cyclic  (i.e., a union of circuits) in $\cM$ if and only if $E \setminus D$ is a flat (i.e., an intersection of hyperplanes) of $\cM^*$.
    Writing $r(D)$ in terms of $r^*$ gives the equality $r(D) = |D| - k$ precisely when $r^*(E \setminus D) = t-k$.
\end{proof}

Let $\cD(\cM)$ be the set of all cyclic sets in a matroid $\cM$.
It was shown by Tutte~\cite[Section 4]{Tutte} that $\cD(\cM)$ has a lattice structure with partial order $D < D'$ given by inclusion.
It follows from Lemma~\ref{lem:k-circuit+coflat} that there is an order-reversing bijection between $\cD(\cM)$ and the lattice of flats $\cL(\cM^*)$ of $\cM^*$
\[
\cD(\cM) \rightarrow \cL(\cM^*) \, , \quad D \mapsto E \setminus D \, .
\]
The following properties are routine to check from this observation.

\begin{lemma}\label{lem:circuit+lattice}
    $\cD(\cM)$ is a graded atomic lattice.
    Its join and meet operations are
    \[
    D \vee D' = D \cup D' \, , \quad D \wedge D' = \{e \in D \cap D' \mid r(D \cap D') = r((D \cap D') - e) \} \, .
    \]
    Its grading is $\rho \colon \cD(\cM) \rightarrow \ZZ_{\geq 0}$ where $\rho(D) = |D| - r(D)$, i.e., $\rho(D) = k$ if and only if $D$ is a $k$-fold circuit.
\end{lemma}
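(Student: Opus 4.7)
The plan is to deduce the entire statement from Proposition~\ref{prop:k-circuit+coflat} together with Theorem~\ref{thm:lattice+flats}. Proposition~\ref{prop:k-circuit+coflat} gives an inclusion-reversing bijection $\Phi\colon \cD(\cM)\to \cL(\cM^*)$, $D\mapsto E\setminus D$, between cyclic sets of $\cM$ and flats of $\cM^*$. Since $\cL(\cM^*)$ is a geometric lattice, its order-dual is also a lattice, so $\cD(\cM)$ inherits a lattice structure in which meet and join are obtained from those of $\cL(\cM^*)$ by complementation with the roles of $\vee$ and $\wedge$ reversed. All three properties (meet/join formulas, grading, atomistic) will then be translated back via $\Phi$.

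For the join, since the meet in $\cL(\cM^*)$ is intersection,
\[
D \vee D' \;=\; E \setminus \big((E\setminus D)\cap(E\setminus D')\big) \;=\; D \cup D' \, ,
\]
and the union of two cyclic sets is visibly a union of circuits. For the meet, the join in $\cL(\cM^*)$ is $\cl_{\cM^*}$ of the union, so
\[
D \wedge D' \;=\; E \setminus \cl_{\cM^*}\!\big(E \setminus (D\cap D')\big) \, .
\]
A short calculation with the dual-rank formula \eqref{eq:dual+rank} shows that for $e\in D\cap D'$, one has $e \in \cl_{\cM^*}(E\setminus(D\cap D'))$ if and only if $r((D\cap D') - e) = r(D\cap D') - 1$. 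Taking complements within $D\cap D'$, this matches the set-builder description given in the statement; in particular $D\wedge D'$ is cyclic, so the two operations do indeed return elements of $\cD(\cM)$.

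For the grading, the dual-rank formula gives $r^*(\Phi(D)) = t - (|D|-r(D))$ where $t = r(\cM^*)$, so $\rho(D) := |D|-r(D)$ differs from $r^* \circ \Phi$ only by an order-reversing affine transformation. The three grading axioms for $\rho$ therefore follow from the corresponding axioms for the grading $r^*$ of $\cL(\cM^*)$: $\rho(\emptyset)=0$, strict monotonicity of $\rho$ along inclusions of $\cD(\cM)$ is strict monotonicity of $r^*$ along the reverse inclusions, and covers are preserved under $\Phi$. The equivalence $\rho(D)=k \Leftrightarrow D$ is a $k$-fold circuit is just the definition. Finally, the atoms of $\cD(\cM)$ are the elements of $\rho$-value $1$, which are precisely the 1-fold circuits, i.e., the circuits of $\cM$; any non-empty cyclic set is by definition a union (equivalently, a join) of such circuits, giving atomisticity. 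The only non-routine step is verifying the explicit set-builder form of the meet via the dual rank formula, which I expect to be the main computational obstacle; everything else is bookkeeping against the known geometric lattice structure of $\cL(\cM^*)$.
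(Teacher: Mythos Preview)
Your proposal is correct and follows essentially the same route as the paper: both transport the lattice structure from $\cL(\cM^*)$ to $\cD(\cM)$ via the complementation anti-isomorphism of Proposition~\ref{prop:k-circuit+coflat}, translating meet, join, and grading through this map. The only notable difference is your atomisticity argument, which is slightly more direct (using that cyclic sets are by definition unions of circuits, hence joins of atoms) than the paper's appeal to the fact that every flat of $\cM^*$ is an intersection of hyperplanes.
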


We note that the empty set, the unique 0-fold circuit, is the minimum element of this lattice.
The grading on $\cD(\cM)$ is known as the \emph{nullity} elsewhere in the literature.

\begin{remark}
    Though the lattice of flats $\cL(\cM^*)$ is semimodular,
    this property is not passed on to $\cD(\cM)$, as the bijection is order-reversing and hence reverses the semimodular inequality, i.e.,
    \[
    \rho(D) + \rho(D') \leq \rho(D\vee D') + \rho(D \wedge D') \, .
    \]
\end{remark}

We now extend the notion of a principal partition to $k$-fold circuits.
Let $D$ be a $k$-fold circuit of $\cM =(E,r)$, each of the $(k-1)$-fold circuits of $\cM$ contained in $D$ are of the form $D \setminus A_i$ for some $A_i \subset D$.
Moreover, these are exactly the sets covered by $D$ in $\cD(\cM)$: this is depicted in Figure~\ref{fig:lattice+dual}.
Applying Lemmas~\ref{lem:k-circuit+coflat} and \ref{lem:circuit+lattice}, we have that $E \setminus D$ is a flat of $\cM^*$ that is covered by the flats $\{(E \setminus D) \cup A_1, \dots, (E \setminus D) \cup A_\ell\}$.
Applying the covering axiom (Lemma~\ref{lem:covering}) to $E\setminus D$ in $\cL(\cM^*)$, it follows that the sets $\{A_1, \dots, A_\ell\}$ form a partition of $D$.
This leads to the following generalisation of the principal partition from double circuits to $k$-fold circuits.

\begin{figure}
    \centering
    \includegraphics[width=\linewidth]{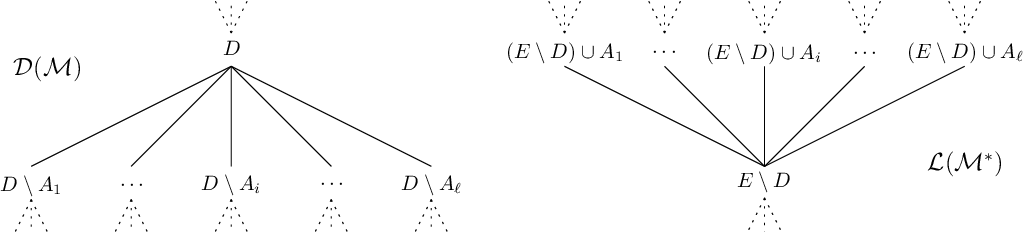}
    \caption{The lattice $\cD(\cM)$ of cyclic sets of $\cM$ and the lattice $\cL(\cM^*)$ of flats of $\cM^*$.
    The two lattices are anti-isomorphic via the map $D \mapsto E\setminus D$.
    Moreover, the covering axiom for flats in $\cL(\cM^*)$ implies the notion of the principal partition for $k$-fold circuits.}
    \label{fig:lattice+dual}
\end{figure}

\begin{definition}\label{def:principalpartfork}
     Let $\cM=(E,r)$ be a matroid and $D \subseteq E$ be a $k$-fold circuit of $\cM$. 
     The \emph{principal partition of D} is the partition $\{A_1, \dots, A_\ell\}$ of $D$ where $\{D \setminus A_i \mid 1 \leq i \leq \ell\}$ is the set of all $(k-1)$-fold circuits of $\cM$ contained in $D$.
\end{definition}

\begin{remark}\label{rem:lattice+restriction}
    If we are analysing some fixed $k$-fold circuit $D$ in $\cM$, we may as well restrict to the matroid $\cM|_D$ as elements of the ground set outside of $D$ do not impact its $k$-fold circuit structure.
    This simplifies the lattice of flats of its dual also: it is straightforward to check that $\cL((M|_D)^*)$ is isomorphic to the interval $[E \setminus D, E]$ in $\cL(\cM^*)$ via the map $X \mapsto X \cup (E \setminus D)$.
    In particular, it is a rank $k$ lattice whose atoms are precisely the parts of the principal partition.
    We will frequently assume that $\cM = \cM|_D$ and utilise this observation throughout proofs in this section.
\end{remark}

The following proposition states that we can equivalently define the principal partition of a $k$-fold circuit in terms of an equivalence relation, and that there is a lower bound on the number of parts.

\begin{proposition} \label{prop:principal+partition+k}
    Let $\cM=(E,r)$ be a matroid and $D \subseteq E$ be a $k$-fold circuit of $\cM$.
    \begin{enumerate}[label=(\alph*)]
        \item The principal partition of $D$ has at least $k$ parts,
        \item Two elements $e,f \in D$ are contained in the same part of the principal partition if and only if $r(D-e-f) = r(D) -1$.
    \end{enumerate}
\end{proposition}

\begin{proof}
    Without loss of generality, let $\cM = \cM|_D$ and $\cM^* = (\cM|_D)^*$ with rank function $r^*$.
    By Remark~\ref{rem:lattice+restriction}, $\cL(\cM^*)$ is a rank $k$ lattice whose atoms are the parts of the principal partition.
    Part (a) follows from the observation that a rank $k$ lattice must have at least $k$ atoms.
    For part (b), the dual rank formula gives
    \[
    r(D - e - f) = r(D) - |{e,f}| + r^*(e,f) \, .
    \]
    This implies $r(D - e -f) = r(D) -1$ if and only if $r^*(e,f) = 1$, or equivalently $e,f$ are contained in the same rank one flat of $\cM^*$.
    As these are the parts of the principal partition, the claim follows.
\end{proof}

The following lemma restricts how parts of the principal partition can interact with circuits of the matroid, which will be useful throughout.

\begin{lemma} \label{lem:PP+circuit+interaction}
    Let $D$ be a $k$-fold circuit in a matroid $\cM$ and $A_i$ a part of the principal partition of $D$.
    For any circuit $C \subseteq D$, either $C \cap A_i = \emptyset$ or $A_i \subseteq C$.
\end{lemma}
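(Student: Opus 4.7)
The plan is to argue by contradiction. Suppose $C \subseteq D$ is a circuit of $\cM$ with both $C \cap A_i \neq \emptyset$ and $A_i \setminus C \neq \emptyset$, and pick $e \in C \cap A_i$ and $f \in A_i \setminus C$. The strategy is to compute $r(D - e - f)$ in two ways and reach a contradiction.

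On the one hand, since $e$ and $f$ both lie in the part $A_i$ of the principal partition, the equivalence-relation characterisation recorded in Remark~\ref{rem:equiv+relation} gives
\[
r(D - e - f) = r(D) - 1.
\]
On the other hand, because $C$ is a circuit containing $e$ we have $r(C - e) = r(C) = |C| - 1$, and hence $e \in \cl(C - e)$. Since $f \notin C$, the set $C - e$ is contained in $D - e - f$, so $e \in \cl(D - e - f)$ as well. Combined with the fact that $D$ is cyclic (so $r(D - f) = r(D)$), this yields
\[
r(D - e - f) = r(D - f) = r(D),
\]
which directly contradicts the previous displayed equation.

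I do not foresee any serious obstacle, since the argument uses only the equivalence-relation formulation of the principal partition from Remark~\ref{rem:equiv+relation} and the defining property of a circuit. As an aside, a dual proof is also available: by Proposition~\ref{prop:k-circuit+coflat} applied to $\cM|_D$, the set $D \setminus C$ is a hyperplane of the loopless matroid $(\cM|_D)^*$ and $A_i$ is a rank-$1$ flat; their intersection is then a flat of rank at most $1$, which must be either empty (forcing $A_i \subseteq C$) or, since distinct rank-$1$ flats in a loopless matroid are disjoint, equal to $A_i$ (forcing $A_i \cap C = \emptyset$). Either route should give a short, self-contained proof.
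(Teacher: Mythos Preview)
Your main argument is correct and is essentially the paper's proof, reorganised as a contradiction: both use the equivalence-relation description of the principal partition (Remark~\ref{rem:equiv+relation}) to get $r(D-e-f)=r(D)-1$, and the circuit property of $C$ to force $e\in\cl(D-e-f)$ and hence $r(D-e-f)=r(D)$, which is exactly the paper's observation that $e$ (their $a$) would be a coloop of $D-f$ yet lie in the circuit $C\subseteq D-f$. Your aside giving the dual argument via Proposition~\ref{prop:k-circuit+coflat} is also correct and is a nice alternative not in the paper.
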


\begin{proof}
Suppose $C \cap A_i \neq \emptyset$ and take $a \in C \cap A_i$.
    For any $b \in A_i - a$, we have $r(D - b - a) = r(D - b) - 1$ by Proposition~\ref{prop:principal+partition+k} (b).
    As $a$ is a coloop in $D-b$ but contained in the circuit $C \subseteq D$, this can only occur if $C \nsubseteq D -b$. Hence $b \in C$ and $A_i \subseteq C$.
\end{proof}

We call a $k$-fold circuit $D$ with principal partition $\{A_1, \dots, A_\ell\}$ \emph{trivial} if $\ell=k$, and non-trivial if $\ell > k$. Our next result  determines the structure of trivial $k$-fold circuits.

\begin{lemma}\label{lem:trivialkcircuit}
    Let $D$ be a trivial $k$-fold circuit in a matroid $\cM$ and $\{A_1,A_2,\ldots,A_k\}$ be its principal partition. Then $A_i$ is a circuit of $\cM$ for all $1\leq i\leq k$ and $\cM|_D=\cM|_{A_1}\oplus \cM|_{A_2}\oplus \cdots \oplus \cM|_{A_k}$.
    \end{lemma}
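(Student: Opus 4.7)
The plan is to pass to the dual matroid of the restriction, $\cM^{\dagger} := (\cM|_D)^*$, and exploit the characterisation of the principal partition derived in the proof of Theorem~\ref{thm:principalpartfork}. There it is shown that $\cM^{\dagger}$ is a loopless matroid of rank $k$ whose rank-one flats are exactly $A_1, \dots, A_k$. In a loopless matroid the rank-one flats are precisely the parallel classes, so each $A_i$ is a parallel class of $\cM^{\dagger}$, and in particular $A_i$ equals the closure of any $b\in A_i$ in $\cM^{\dagger}$.

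Since $\{A_1, \dots, A_k\}$ partitions $D$ and the number of parts equals the rank $k$ of $\cM^{\dagger}$, the next step is to establish the direct sum decomposition $\cM^{\dagger} = \bigoplus_{i=1}^k \cM^{\dagger}|_{A_i}$. Choose a representative $b_i \in A_i$ for each $i$. Since each $A_i$ is the closure of $b_i$ in $\cM^{\dagger}$, the set $\{b_1, \dots, b_k\}$ spans $D$; having exactly $k$ elements, it is a basis of $\cM^{\dagger}$. For an arbitrary $S \subseteq D$, let $T = \{i : S \cap A_i \neq \emptyset\}$ and pick $s_i \in S \cap A_i$ for each $i \in T$. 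Replacing each $b_i$ with the parallel element $s_i$ in the basis shows that $\{s_i : i \in T\}$ is independent in $\cM^{\dagger}$, while parallelism inside each class gives $S \subseteq \cl^*(\{s_i : i \in T\})$. Hence
\[
r^*(S) \;=\; |T| \;=\; \sum_{i=1}^{k} r^*(S \cap A_i),
\]
which is precisely the rank formula for the direct sum, and each summand $\cM^{\dagger}|_{A_i}$ is the uniform matroid $U_{1,|A_i|}$.

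Dualising (using that duality commutes with direct sums and is an involution) then gives $\cM|_D = \bigoplus_{i=1}^k \cM|_{A_i}$, with each $\cM|_{A_i} = U_{|A_i|-1,|A_i|}$. The unique circuit of this uniform matroid is $A_i$ itself (which degenerates to a single loop in the edge case $|A_i| = 1$, still a valid circuit), giving both conclusions of the lemma. The main obstacle is the verification of the direct sum decomposition on the dual side; once the parallel class structure coming from Theorem~\ref{thm:principalpartfork} is in hand, this reduces to the short rank computation above. A purely primal attempt, aiming to force each $A_i$ to be a circuit by exploiting Lemma~\ref{lem:PP+circuit+interaction} and bootstrapping from there, appears to require essentially the same rank bookkeeping in a less transparent form.
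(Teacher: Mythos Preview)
Your proof is correct and follows essentially the same route as the paper: pass to the dual $(\cM|_D)^*$, recognise the $A_i$ as the rank-one flats (parallel classes) of a loopless rank-$k$ matroid, establish the direct sum decomposition on the dual side, and then dualise back. The only difference is cosmetic: where the paper invokes \cite[Proposition~4.2.1]{Oxl} (the criterion $r^*(D)=\sum_i r^*(A_i)$ for a direct sum), you verify the rank identity $r^*(S)=\sum_i r^*(S\cap A_i)$ by hand using a basis of parallel-class representatives, and where the paper reads off that each $A_i$ is a circuit via Proposition~\ref{prop:k-circuit+coflat}, you instead identify $\cM|_{A_i}\cong U_{|A_i|-1,|A_i|}$ explicitly.
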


\begin{proof}
    Again, without loss of generality let $\cM = \cM|_D = (D,r)$ and $\cM^* = *\cM|_D)^* = (D, r^*)$ be its matroid dual. Then $r^*(D)=|D|-r(D)=k$. Since each $A_i$ is a rank 1 flat of $\cM^*$ by Lemma \ref{lem:k-circuit+coflat}, we have $r^*(D)=k=r^*(A_1)+r^*(A_2)+\cdots+r^*(A_k)$.
    It follows from~\cite[Proposition 4.2.1]{Oxl} that $\cM^*=\cM^*|_{A_1}\oplus \cM^*|_{A_2}\oplus \cdots \oplus \cM^*|_{A_k}$. 
    This implies that $D\setminus A_i$ is a rank $(k-1)$ flat in $\cM^*$. We can now apply Lemma \ref{lem:k-circuit+coflat} again to deduce that each $A_i$ is a circuit in $\cM$ and $\cM=\cM|_{A_1}\oplus \cM|_{A_2}\oplus \cdots \oplus \cM|_{A_k}$.
\end{proof}

Lemma \ref{lem:trivialkcircuit} implies that the trivial $k$-fold circuits of a matroid $\cM$ are not $\cM$-connected whenever $k\geq 2$. The converse is true for $k=2$ by Proposition \ref{prop:decompose} below, but the following example shows it can be false when $k\geq 3$.

\begin{example}
  We give an example of two non-trivial triple circuits with the same principal partition, one of which is $\cM$-connected and one of which is not.

    Let $\cM= U_{1,4}$ be the rank one uniform matroid on four elements and let $T = \{1,2,3,4\}$ be its unique triple circuit.
    Any 3-subset is a double circuit, and so the principal partition of $T$ has four parts.
    Moreover, any pair of elements of $T$ is a circuit of $\cM$, and so it is $\cM$-connected.

    Now let $\cM' = U_{1,3} \oplus U_{0,0}$, obtained from $\cM$ by making the singleton set $\{4\}$ dependent.
    Again, $T = \{1,2,3,4\}$ is its unique triple circuit.
    Moreover, it is straightforward to check that any 3-subset gives a double circuit of $\cM'$, and so the principal partition of $T$ has four parts.
    However, $T$ is clearly not $\cM'$-connected as the only circuit of $\cM'$ containing $\{4\}$ is itself.
    For triple circuits and $k$-fold circuits more generally, connectivity cannot be read off from the principal partition as it only gives direct information about the $(k-1)$-fold circuits.
\end{example}

We can characterise connected $k$-fold circuits by the existence of ear decompositions into $k$ ears.

\begin{proposition}\label{prop:kcircuitears}
    Let $\cM = (E,r)$ be a matroid and $D \subseteq E$.
    Then $D$ is an $\cM$-connected $k$-fold circuit if and only if $\cM|_D$ has an ear decomposition into $k$ ears.
\end{proposition}

\begin{proof}
Lemma~\ref{lem:ear}(1) gives that $D$ is $\cM$-connected if and only if $\cM|_D$ has an ear decomposition.
Let $(C_1, \dots, C_m)$ be such an ear decomposition, with $D_i = C_1 \cup \cdots \cup C_i$.
We show by induction that $r(D_i) = |D_i| - i$ for all $1 \leq i \leq m$.
The base case follows from $D_1 = C_1$ being a circuit.
For the inductive step, Lemma~\ref{lem:ear}(3) gives us
\[
r(D_i) = r(D_i) + |D_i \setminus D_{i-1}| -1 = |D_{i-1}| - (i-1) + |D_i \setminus D_{i-1}| - 1 = |D_i| - i \, .
\]
It follows that $D$ is a $k$-fold circuit if and only if the ear decomposition has $k$ ears.
\end{proof}

We end this section by demonstrating how $k$-fold circuits behave when combined under various operations.

\begin{proposition}\label{prop:decompose}
    Let $\cM = (E,r)$ be a matroid and $D,D_1,\ldots,D_s \subseteq E$ such that  $\cM|_D = \bigoplus_{i=1}^s\cM|_{D_i}$.
    Then $D$ is a $k$-fold circuit if and only if each $D_i$ is a $k_i$-fold circuit and $k = \sum_{i=1}^s k_i$.
    Moreover, if $D_i$ has principal partition $\cA_i$, then the principal partition of $D$ is $\cA = \bigcup_{i=1}^s \cA_i$.
\end{proposition}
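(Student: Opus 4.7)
The plan is to reduce everything to standard facts about direct sums of matroids and the characterisation of the principal partition via pairwise rank drops (Remark~\ref{rem:equiv+relation}). Recall that $\cM|_D=\bigoplus_{i=1}^s\cM|_{D_i}$ implies two useful facts: (i) ranks and cardinalities add, i.e. $r(D)=\sum_i r(D_i)$ and $|D|=\sum_i|D_i|$, and (ii) every circuit of $\cM|_D$ is contained in a single summand $D_i$. I would state these at the outset and use them freely.

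First I would show that $D$ is cyclic if and only if each $D_i$ is cyclic. For the forward direction, if $e\in D_i$ then $e$ lies in some circuit $C$ of $\cM|_D$, and (ii) forces $C\subseteq D_i$, so $D_i$ is cyclic. The converse is immediate since a union of circuits in each $D_i$ is also a union of circuits of $\cM|_D$. Combined with $|D|-r(D)=\sum_i(|D_i|-r(D_i))$, this already yields the first claim: $D$ is a $k$-fold circuit if and only if each $D_i$ is a $k_i$-fold circuit with $k=\sum_i k_i$, where $k_i=|D_i|-r(D_i)$.

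For the principal partition, I would invoke Remark~\ref{rem:equiv+relation}: $e\sim f$ in $D$ iff $r(D-e-f)=r(D)-1$. Using additivity of rank across a direct sum, if $e,f\in D_i$ then
\[
r(D-e-f)=r(D_i-e-f)+\sum_{j\ne i}r(D_j),
\]
so $e\sim f$ in $D$ iff $r(D_i-e-f)=r(D_i)-1$, i.e. iff $e\sim f$ in $D_i$. If instead $e\in D_i$ and $f\in D_j$ with $i\ne j$, then since each $D_i$ is cyclic we have $r(D_i-e)=r(D_i)$ and $r(D_j-f)=r(D_j)$, giving $r(D-e-f)=r(D)$, so $e\not\sim f$. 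This shows the equivalence classes of $\sim$ on $D$ are precisely the union of the equivalence classes of $\sim$ on each $D_i$, which is exactly the statement $\cA=\bigcup_i\cA_i$.

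There is no real obstacle here; the only point requiring a small amount of care is verifying that the pairwise characterisation of the principal partition really does extend from double circuits to arbitrary $k$-fold circuits, but this is recorded in Remark~\ref{rem:equiv+relation}, and I would simply cite it.
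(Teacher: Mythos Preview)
Your proposal is correct and follows essentially the same approach as the paper: both reduce to additivity of rank across a direct sum and then invoke the pairwise characterisation of the principal partition from Remark~\ref{rem:equiv+relation}. The only cosmetic difference is that the paper verifies the cyclic equivalence directly from $r(X)=\sum_i r_i(X\cap D_i)$ rather than via the fact that every circuit of a direct sum lies in a single summand, but this is immaterial.
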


\begin{proof}
    Again without loss of generality, we let $\cM = \cM|_D$, i.e., $D = E$.
    Let $r_i = r|_{D_i}$, then for all $X\subseteq D$, we have
    $r(X) = \sum_{i=1}^s r_i(X \cap D_i)$ (see \cite[4.2.13]{Oxl}). 
    This immediately implies that $D$ is a cyclic set in $\cM$ if and only if each $D_i$ is a cyclic set in $\cM$. 
    In addition, if either statement holds then we have 
$ |D| - k = r(D) = \sum_{i=1}^s r_i(D_i) = \sum_{i=1}^s |D_i| - k_i  $
    and hence $k = \sum_{i=1}^s k_i$.

    By Proposition~\ref{prop:principal+partition+k}, $e,f \in D$ are in the same part of $\cA$ if and only if
    \[
    \sum_{i=1}^s r_i(D_i -e -f) = r(D- e - f) = r(D) - 1 = \sum_{i=1}^s r_i(D_i) -1 \, .
    \]
    To decrease the rank of $\sum_{i=1}^s r_i(D_i)$, both $e$ and $f$ must be removed from the same $D_i$.
    The rank then decreases if and only if $e,f$ are in the same part of $\cA_i$.
    Hence the principal partition of $D$ is just the union of the principal partitions of each $D_i$.
\end{proof}

\section{\boldmath The $k$-fold circuit property}
\label{sec:hcp}

Recall that a matroid $\cM$ has the \emph{double circuit property} if every double circuit $D$ in $\cM$ satisfies \eqref{eq:dcpalt}.
Dress and Lov\'{a}sz in \cite{DL} showed that many natural classes of matroids either satisfy this 
property, or can be embedded inside a matroid that satisfies it (see Example~\ref{ex:full+matroids} below).

Given a  $k$-fold circuit $D$ in a matroid $\cM$ with principal partition $\{A_1,A_2,\dots,A_\ell\}$, we can consider when the inequality $r(\bigcap_{i=1}^\ell \cl(D\setminus A_i)) \geq \ell-k$ holds. This is a straightforward generalisation of \eqref{eq:dcpalt} to $k$-fold circuits. We first 
show that the reverse inequality holds in all matroids.  This implies, in particular, that \eqref{eq:dcpalt} will hold \emph{with equality} in all matroids which have the double circuit property.

\begin{theorem}\label{thm:strong}
    Let $D$ be  a $k$-fold circuit in a matroid $\cM=(E,r)$ and $\{A_1, \dots, A_\ell\}$ be the principal partition of $D$. Then 
    \begin{equation}\label{eq:dcpkrev}
   r\left(\bigcap_{i=1}^\ell \cl(D\setminus A_i)\right) \leq \ell-k\,.
\end{equation} 
In addition, \eqref{eq:dcpkrev} holds with equality {if and only if} $\bigcap_{i=1}^{n-1} \cl(D\setminus A_i)$ and $\cl(D\setminus A_n)$ is a modular pair of flats of $\cM$ for all $2\leq n\leq \ell$.
\end{theorem}

\begin{proof}
    We  use  
    induction on $m$ to 
    show that, for all $1\leq m\leq \ell$, 
    \begin{equation} \label{eq:Wi+dimensionrev}
    r\left(\bigcap_{i=1}^m \cl(D\setminus A_i)\right) \leq r(D) - \left(\sum_{j=1}^m |A_j|\right) + m \, ,
    \end{equation}
    with equality if and only if $\bigcap_{i=1}^{n-1} \cl(D\setminus A_i)$ and $\cl(D\setminus A_n)$ is a modular pair of flats of $\cM$ for all $2\leq n\leq m$.
    We first consider the base case $m=1$. By the definition of the principal partition, removing one element of $A_1$ from $D$ does not decrease the rank, but removing every subsequent element decreases the rank by one.
    This immediately gives  $r(\cl(D \setminus A_1))=r(D \setminus A_1) = r(D) - |A_1| + 1$, so (\ref{eq:Wi+dimensionrev}) holds with equality when $m=1$.

    Now consider $r(\bigcap_{i=1}^m \cl(D\setminus A_i))$ when $m\geq 2$.
    By (sub)modularity and induction, 
    we have
    \begin{align} \label{eq:linear+int+dimensionrev}
    r\left(\bigcap_{i=1}^m \cl(D\setminus A_i)\right) &\leq  r\left(\bigcap_{i=1}^{m-1} \cl(D\setminus A_i)\right) + r\left(\cl(D\setminus A_m)\right) - r\left(\left(\bigcap_{i=1}^{m-1} \cl(D\setminus A_i)\right) \bigcup \cl(D\setminus A_m)\right) \nonumber \\
    &\leq  2 \cdot r(D) - \left(\sum_{i=1}^m |A_j|\right) + m - r\left(\left(\bigcap_{i=1}^{m-1}\cl(D\setminus A_i)\right) \bigcup \cl(D\setminus A_m)\right) \, ,
    \end{align}
    and equality holds throughout if and only if  $\bigcap_{i=1}^{n-1} \cl(D\setminus A_i)$ and $\cl(D\setminus A_n)$ is a modular pair of flats of $\cM$ for all $2\leq n\leq m$.
    Since $D\subseteq (\bigcap_{i=1}^{m-1}\cl(D\setminus A_i)) \cup \cl(D\setminus A_m)\subseteq \cl(D)$, we have
    \[
     r\left(\left(\bigcap_{i=1}^{m-1}\cl(D\setminus A_i)\right) \cup \cl(D\setminus A_m)\right)= r(D)\, .
    \]
    Substituting this equation into \eqref{eq:linear+int+dimensionrev} gives  \eqref{eq:Wi+dimensionrev}, with equality if and only if $\bigcap_{i=1}^{n-1} \cl(D\setminus A_i)$ and $\cl(D\setminus A_n)$ is a modular pair of flats of $\cM$ for all $2\leq n\leq m$.

    Finally, we can combine \eqref{eq:Wi+dimensionrev} in the case where $m = \ell$ with the fact that $r(D) = |D| - k$, to obtain
    \[
    r\left(\bigcap_{i=1}^{\ell} \cl(D\setminus A_i)\right) \leq  r(D) - \left(\sum_{j=1}^\ell |A_j|\right) + \ell = |D| - k - |D| + \ell = \ell - k \, ,
    \]
    with equality if and only if $\bigcap_{i=1}^{n-1} \cl(D\setminus A_i)$ and $\cl(D\setminus A_n)$ is a modular pair of flats of $\cM$ for all $2\leq n\leq \ell$.
    This completes the proof of the theorem.
    \end{proof} 

Theorem \ref{thm:strong} suggests the following extension of the double circuit property 
to $k$-fold circuits. 

\begin{definition}
A $k$-fold circuit $D$ with principal partition $\{A_1,A_2,\dots,A_\ell\}$ in a matroid $\cM=(E,r)$ is \emph{balanced} if it satisfies
\begin{equation}\label{eq:dcpkalt}
   r(\bigcap_{i=1}^\ell \cl(D\setminus A_i)) = \ell-k.
\end{equation} 
The matroid $\cM$ has the \emph{$k$-fold circuit property} if all of its $k$-fold circuits are balanced.
\end{definition}
We will use the convention that if a matroid has no $k$-fold circuits, it trivially satisfies the $k$-fold circuit property.

The 2-fold circuits of $\cM(2,3)$ given in Example \ref{ex:dc} are all balanced. In (a) and (b) we have 
$r_{2,3}(\bigcap_{i=1}^2\cl(D\setminus A_i))=r_{2,3}(\emptyset)=0$, in (c) we have 
$r_{2,3}(\bigcap_{i=1}^3\cl(D\setminus A_i))=r_{2,3}(K_2)=1$, and in (d) we have 
 $r_{2,3}(\bigcap_{i=1}^7\cl(D\setminus A_i))=r_{2,3}(K_4)=5$.

Note that every matroid $\cM$ has the 1-fold circuit property since, if $C$ is a circuit in $\cM$, then its principal partition is $\{C\}$ and we have $r(\cl(C\setminus C))=r(\emptyset)=0=1-1$. Theorem \ref{thm:strong} immediately gives us a class of matroids which have the $k$-fold circuit property for all $k\geq 1$.

\begin{corollary}\label{cor:cyclicflats} Let $\cM$ be a matroid and suppose that all pairs of flats of $\cM$ are modular. Then $\cM$ has the $k$-fold circuit property for all $k\geq 1$.
\end{corollary}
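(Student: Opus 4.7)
The plan is to apply Theorem~\ref{thm:strong} directly. Given any $k$-fold circuit $D$ in $\cM$ with principal partition $\{A_1,\dots,A_\ell\}$, Theorem~\ref{thm:strong} tells us that $D$ is balanced (equality in \eqref{eq:dcpkrev}) precisely when, for every $2\leq n\leq \ell$, the pair
\[
\left(\bigcap_{i=1}^{n-1}\cl(D\setminus A_i),\ \cl(D\setminus A_n)\right)
\]
is a modular pair of flats of $\cM$. So the entire task reduces to verifying that both entries in each such pair are in fact flats, after which the hypothesis that all pairs of flats of $\cM$ are modular finishes the job.

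First I would note that each $\cl(D\setminus A_i)$ is a flat by the idempotence of the closure operator. Next I would invoke the standard fact (see, e.g., \cite[Section 1.4]{Oxl}) that the intersection of any family of flats is itself a flat; this is immediate from Theorem~\ref{thm:lattice+flats}, where the meet operation in $\cL(\cM)$ is set-theoretic intersection. Hence $\bigcap_{i=1}^{n-1}\cl(D\setminus A_i)$ is a flat of $\cM$ for every $n$.

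With both entries of the pair confirmed to be flats, the modularity hypothesis on $\cM$ guarantees each pair is modular. By the equality clause of Theorem~\ref{thm:strong}, $D$ is balanced. Since $D$ was an arbitrary $k$-fold circuit and $k\geq 1$ was arbitrary, $\cM$ has the $k$-fold circuit property for every $k\geq 1$. There is no genuine obstacle to overcome here; the content of the corollary is already packaged inside the equality statement of Theorem~\ref{thm:strong}, and all that is required is to match the hypothesis of universal modularity to that statement via the observation that intersections of flats are flats.
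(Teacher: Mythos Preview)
Your proposal is correct and matches the paper's approach: the paper states Corollary~\ref{cor:cyclicflats} as an immediate consequence of Theorem~\ref{thm:strong} without further argument, and your write-up simply spells out the one observation needed (that intersections of flats are flats) to connect the hypothesis to the equality clause of that theorem.
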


The second part of Theorem \ref{thm:strong} gives a characterisation of balanced $k$-fold circuits.

\begin{corollary}\label{cor:balanced+modular+sublattice}
A $k$-fold circuit $D$ of $\cM$ with principal partition $\{A_1, \dots, A_\ell\}$ is balanced if and only if $\bigcap_{i\in I} \cl(D\setminus A_i)$ and $\cl(D\setminus A_j)$ is a modular pair of flats of $\cM$ for all $\emptyset \subsetneq I \subsetneq [\ell]$ and $j \in [\ell] \setminus I$.   
\end{corollary}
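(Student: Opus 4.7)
The plan is to deduce this corollary from Theorem~\ref{thm:strong} by exploiting the fact that the balanced property is manifestly symmetric in the indexing of the principal partition, while the modularity condition in Theorem~\ref{thm:strong} is phrased in terms of a fixed ordering $A_1, A_2, \dots, A_\ell$. The freedom to permute the labels of the parts is what lets us upgrade the statement of Theorem~\ref{thm:strong} to the symmetric form claimed here.

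For the reverse direction ($\Leftarrow$), I would simply specialize the hypothesis to the nested sequence of subsets $I_n = \{1, 2, \dots, n-1\}$ with $j = n$, for each $2 \leq n \leq \ell$. This immediately gives the modularity condition appearing in the second half of Theorem~\ref{thm:strong}, and hence $D$ is balanced.

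For the forward direction ($\Rightarrow$), suppose $D$ is balanced, and fix any nonempty proper subset $I \subsetneq [\ell]$ with $j \in [\ell]\setminus I$. The key observation is that the defining equation of balanced, $r\bigl(\bigcap_{i=1}^{\ell}\cl(D\setminus A_i)\bigr) = \ell - k$, is invariant under permuting the labels of $A_1,\dots,A_\ell$, so relabelling the parts of the principal partition keeps $D$ balanced. I would therefore pick a permutation $\pi$ of $[\ell]$ with $\pi(\{1,\dots,|I|\}) = I$ and $\pi(|I|+1) = j$, and apply Theorem~\ref{thm:strong} to the relabelled principal partition $\{A_{\pi(1)},\dots,A_{\pi(\ell)}\}$. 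Taking $n = |I|+1$ in the conclusion of that theorem yields that $\bigcap_{i=1}^{|I|}\cl(D\setminus A_{\pi(i)})$ and $\cl(D\setminus A_{\pi(|I|+1)})$ form a modular pair, which is precisely $\bigcap_{i\in I}\cl(D\setminus A_i)$ and $\cl(D\setminus A_j)$.

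There is no real obstacle here — the corollary is a direct symmetrisation of the equality case of Theorem~\ref{thm:strong}, and the only thing to check is the invariance of the balanced condition under relabelling, which is immediate from its definition since intersection is commutative. The main point worth flagging in the write-up is why one can restrict to sequences of the form $(1,2,\dots,\ell)$: namely, the labels $A_1,\dots,A_\ell$ of the principal partition are arbitrary, so any conclusion that holds for the "canonical" ordering must hold after any permutation of the labels.
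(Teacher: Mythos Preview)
Your proposal is correct and follows essentially the same approach as the paper: invoke Theorem~\ref{thm:strong} for the ordered sequence $I=\{1,\dots,n-1\}$, $j=n$, and then use the permutation-invariance of the balanced condition to extend to arbitrary $I$ and $j$. Your write-up is simply a more explicit version of what the paper does in two sentences.
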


\begin{proof}
    Theorem \ref{thm:strong} gives the claim for all sets of the form $I = \{1, \dots, n-1\}$ and $j = n$.
    As the definition of balanced is independent of the ordering of the principal partition, we can reorder the principal partition and apply Theorem \ref{thm:strong} to give the claim for any $\emptyset \subsetneq I \subsetneq [\ell]$ and $j \in [\ell] \setminus I$.
\end{proof}

Our characterisation shows that the $k$-fold circuit property is connected to modular structures within the matroid $\cM$.
We shall make this connection more precise 
in Section~\ref{sec:pseudo}.

We next focus on  matroids that do satisfy the $k$-fold circuit property.
Dress and Lov\'{a}sz showed that a number of `full' matroids satisfy this property. 
The \emph{full linear matroid} $L^s_\KK$ of rank $s$ over a field $\KK$ is 
the matroid on $\KK^s$ in which independence is given by linear independence in the vector space $\KK^s$. Dress and Lov\'asz showed that $L^s_\KK$ has the double circuit property. Their result can be extended to the $k$-fold circuit property by applying Corollary \ref{cor:cyclicflats}, using the fact that  the flats of $L^s_\KK$ are the subspaces of $\KK^s$ so all pairs of flats are modular.  

\begin{corollary}\label{cor:fulllinear}
    Full linear matroids have the $k$-fold circuit property for all $k\geq 1$.
\end{corollary}

    Every matroid $\cM$ of rank $s$ that is representable over $\KK$ can be embedded inside the full linear matroid $L_\KK^s$, where the embedding preserves (in)dependence.
    However, the closure of a set in $L_\KK^s$ is a subspace of $\KK^s$ and hence can be much larger than in $\cM$. 
    This allows an unbalanced $k$-fold circuit of $\cM$ to become balanced when considered as a $k$-fold  circuit in $L^s_\KK$.

\begin{example}\label{ex:full+matroids}
    Let $\cM = (D,r)$ be the column matroid of the real matrix
    \[
    \begin{blockarray}{cccccc}
    v_1 & v_2 & v_3 & v_4 & v_5 & v_6 \\
    \begin{block}{[cccccc]}
        1 & 0 & 0 & 0 & 1 & 1 \\
        0 & 1 & 0 & 0 & -1 & 0 \\
        0 & 0 & 1 & 0 & 0 & -1 \\
        0 & 0 & 0 & 1 & 1 & 1 \\
        \end{block}
    \end{blockarray} \, ,
    \]
    where $D = \{v_1, \dots, v_6\}$.
    It is straightforward to check that $D$ is a double circuit with principal partition $\{\{v_1,v_4\},\{v_2,v_5\},\{v_3,v_6\}\}$.
    As such, $D$ is not balanced in $\cM$, as
    \[
    r\left(\bigcap_{i=1}^3 \cl_{\cM}(D \setminus A_i)\right) = r(\{v_1, v_2,v_4,v_5\} \cap \{v_1, v_3,v_4,v_6\} \cap \{v_2,v_3,v_5,v_6\}) = r(\emptyset) < 3-2 \, .
    \]
    However, embedding $\cM$ inside the full linear matroid $L_{\RR}^4$, we see that $D$ is balanced in $L_{\RR}^4$:
    \begin{align*}
    r\left(\bigcap_{i=1}^3 \cl_{L_{\RR}^4}(D \setminus A_i)\right) &= r({\rm span}\{v_1, v_2,v_4,v_5\} \cap {\rm span}\{v_1, v_3,v_4,v_6\} \cap {\rm span}\{v_2,v_3,v_5,v_6\}) \\
    &= r({\rm span}(1,0,0,1)) = 1=3 - 2  \, .
    \end{align*}
    \end{example}

    Lov\'asz and Dress showed that other families of `full' matroids
    satisfy the double circuit property in~\cite{DL}. 
    We will extend these results to the $k$-fold circuit property in Section \ref{sec:pseudo}.

We next give a large family of matroids which have the $k$-fold circuit property for all $k\geq 3$.
A matroid $\cM$ of rank $s$ is \emph{paving} if every circuit $C \in \cC(\cM)$ has $|C| = s$ or $s+1$.
A paving matroid is \emph{sparse} if every circuit $C$ of cardinality $s$ is a flat, i.e. $r(C + e) > r(C) = s-1$.

\begin{proposition}\label{prop:sparse+paving}
    Let $\cM = (E,r)$ be a sparse paving matroid.
    Then $\cM$ has the $k$-fold circuit property for all $k \geq 3$.
\end{proposition}
    
\begin{proof}
    We first show that for all $2 \leq k \leq |E| - s$, every $D \in \binom{E}{s+k}$ is a $k$-fold circuit of rank $s$, i.e. $r(D) = r(D-e) = s$ for all $e \in D$.
    We proceed by induction on $k$.
    Let $D \in \binom{E}{s+2}$ and $e \in D$, we show $r(D) = r(D-e) = s$ and hence $D$ a double circuit.
    If $D - e$ a circuit then this follows immediately from $|D -e| = s+1$.
    If $D-e$ is not a circuit, then it properly contains some circuit $C$.
    As $\cM$ is sparse paving, we have $|C| = s$ and $s-1 = r(C) < r(D-e) = r(D) = s$.
    
    For the general case, if $D \in \binom{E}{s+k}$ then $s = r(D-e) = r(D)$ by the inductive hypothesis, utilising that $D-e$ is a $(k-1)$-fold circuit. Let $D$ be a $k$-fold circuit of $\cM$ for $k \geq 3$.
    The earlier claim tells us that for all $e \in D$, we have $D-e$ is a $(k-1)$-fold circuit, and $\cl(D-e) = E$.
    It follows that $D$ is a balanced $k$-fold circuit, as
    \[
    r\left(\bigcap_{e \in D} \cl(D -e) \right) = r(E) = s = |D| - k \, .
    \]
\end{proof}

We can use Proposition \ref{prop:sparse+paving} to
construct examples of matroids that do not satisfy the double circuit property, but do satisfy the $k$-fold circuit property for all $k \geq 3$.

\begin{example}\label{ex:sparse+paving}
    Let $E = \{1, \dots, 2t + 4\}$ where $t\geq 2$ and consider $\cH = \{H_i \mid 1 \leq i \leq t+2\}$ where $H_i = \{2i-1, 2i\}$.
    Let $\cM$ be the rank $2t$ matroid on $E$ whose circuits are
    \[
    \cC(\cM) = \{E \setminus (H_i \cup H_j) \mid 1 \leq i \neq j \leq t+2\} \cup \{E \setminus \{a,b,c\} \mid H_i \nsubseteq \{a,b,c\} \text{ for all } 1 \leq i \leq t+2 \} \, .
    \]
    It is straightforward to verify that $\cM$ is a sparse paving matroid, hence it satisfies the $k$-fold circuit property for $k \geq 3$.
    However, it does not satisfy the double circuit property: let $D = \{1, \dots, 2t+2\} = E \setminus H_{t+2}$.
    It follows $D$ is a double circuit with principal partition $\{H_1, \dots, H_{s+1}\}$.
    By the definition of sparse paving, each $D \setminus H_i$ is a flat, and hence
    \[
    r\left(\bigcap_{i=1}^{t+1} \cl(D \setminus H_i)\right) = r(\emptyset) = 0 < (t+1) -2 \, .
    \]
\end{example}

\begin{remark}
    It is conjectured that almost all matroids are sparse paving matroids~\cite{MNWW}. If true, Proposition \ref{prop:sparse+paving} would imply  that almost all matroids satisfy the $k$-fold circuit property for all $k \geq 3$.
\end{remark}

We next show there exists matroids that fail the $k$-fold circuit property for arbitrarily high $k$.
Explicitly, for any $m \in \NN$, we can find some matroid that does not have the $k$-fold circuit property for all $2 \leq k \leq m$.
We let $P_{j}$ be the graph with vertices $u, v$ and $\{w_1, \dots, w_j\}$, and edges $\{uw_i \mid 1 \leq i \leq j\}$ and $\{vw_i \mid 1 \leq i \leq j\}$.
Observe that $P_j$ is the edge-disjoint union of $j$ paths of length two between $u$ and $v$.

\begin{proposition} \label{prop:kfold+failure}
    Let $G$ be a graph that contains $P_{m+1}$ as an induced subgraph.
    Then the cycle matroid $\cM_{1,1}(G)$ does not have the $k$-fold circuit property for all $2 \leq k \leq m$.
\end{proposition}

\begin{proof}
Write $\cM := \cM_{1,1}(G)$.
 Note that if $G$ contains $P_{m+1}$ as an induced subgraph, it also contains $P_{j}$ as an induced subgraph for all $j \leq m+1$.
 Hence it suffices to prove $\cM$ does not have the $m$-fold circuit property.

 It is straightforward to verify that $P_{m+1}$ is an $m$-fold circuit of $\cM$, and that its principal partition is $\{A_1, \dots, A_{m+1}\}$ where $A_i = \{uw_i, vw_i\}$.
 Moreover, $P_{m+1} \setminus A_i \cong P_m$ and is already closed in $\cM$.
 As there is no edge contained in all copies of $P_{m+1} \setminus A_i$, it follows that
 \[
 r\left(\bigcap_{i=1}^{m+1} \cl(P_{m+1} \setminus A_i)\right) = r(\emptyset) = 0 < m+1 - m \, .
 \]
 Hence $P_{m+1}$ is not balanced, and so $\cM$ does not satisfy the $m$-fold circuit property.
\end{proof}

We next consider the balanced property for a disconnected $k$-fold circuit in a matroid $\cM$. 
We will need the following elementary lemma.

\begin{lemma}\label{lem:elemalt} Let $\cM=(E,r)$ be a matroid. Suppose that  $r(X_1\cup X_2)=r(X_1)+ r(X_2)$ for some $X_1,X_2\subseteq E$. Then $r(\cl(X_1)\cup \cl(X_2))=r(\cl(X_1))+ r(\cl(X_2))$.
\end{lemma}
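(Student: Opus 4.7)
The plan is to reduce the statement to two familiar facts about the closure operator: first, $r(\cl(X))=r(X)$ for every $X\subseteq E$, and second, $\cl(X_1\cup X_2)=\cl(\cl(X_1)\cup \cl(X_2))$, which in particular gives $r(X_1\cup X_2)=r(\cl(X_1)\cup \cl(X_2))$. Together with the hypothesis, these should collapse the desired identity into a short chain of equalities.

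First I would establish the sandwich $X_1\cup X_2\ \subseteq\ \cl(X_1)\cup \cl(X_2)\ \subseteq\ \cl(X_1\cup X_2)$. The left inclusion is immediate since each $X_i\subseteq \cl(X_i)$, and the right inclusion follows from $\cl(X_i)\subseteq \cl(X_1\cup X_2)$ for $i=1,2$. Applying $r$ and using monotonicity plus $r(\cl(X_1\cup X_2))=r(X_1\cup X_2)$ forces $r(\cl(X_1)\cup \cl(X_2))=r(X_1\cup X_2)$.

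Combining this with the hypothesis $r(X_1\cup X_2)=r(X_1)+r(X_2)$ and the two identities $r(\cl(X_i))=r(X_i)$, I would conclude
\[
r(\cl(X_1)\cup \cl(X_2))\ =\ r(X_1\cup X_2)\ =\ r(X_1)+r(X_2)\ =\ r(\cl(X_1))+r(\cl(X_2)),
\]
which is exactly the claim. There is no real obstacle here; the lemma is a routine bookkeeping statement about how the closure operator interacts with the rank function, and the only care needed is to invoke the standard fact that enlarging a set by elements that lie in its closure leaves the rank unchanged.
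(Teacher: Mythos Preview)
Your proof is correct and follows essentially the same sandwich argument as the paper. The only minor variation is that the paper bounds $r(\cl(X_1)\cup\cl(X_2))$ above by $r(\cl(X_1))+r(\cl(X_2))$ via submodularity, whereas you bound it above by $r(\cl(X_1\cup X_2))=r(X_1\cup X_2)$ via the inclusion $\cl(X_i)\subseteq\cl(X_1\cup X_2)$; either choice collapses the chain to the desired equality.
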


\begin{proof} 
We can use the submodularity and monotonicity of $r$ to deduce that
$$r(X_1)+r(X_2)=r(X_1\cup X_2)\leq r(\cl(X_1)\cup\cl(X_2))\leq r(\cl(X_1))+r(\cl(X_2))=r(X_1)+r(X_2).$$
Hence equality must hold throughout. In particular, we have $r(\cl(X_1)\cup \cl(X_2))=r(\cl(X_1))+ r(\cl(X_2))$.
\end{proof}

We saw in Proposition \ref{prop:decompose} that the direct sum of a  $k_1$-fold circuit and a $k_2$-fold circuit is a $(k_1+k_2)$-fold circuit. Our next result shows that the balanced property is preserved by this operation.

\begin{lemma}\label{lem:disconalt} Suppose  $D_i$ is a balanced $k_i$-fold circuit in a matroid $\cM$ for both  $i=1,2$ and $\cM|_{D_1\cup D_2}=\cM|_{D_1}\oplus \cM|_{D_2}$. Then $D_1\cup D_2$  is a balanced $(k_1+k_2)$-fold circuit in $\cM$.
\end{lemma}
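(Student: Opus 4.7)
The plan is to deduce the lemma from Theorem~\ref{thm:strong} by establishing the matching lower bound on the rank of the closure intersection. By Proposition~\ref{prop:decompose}, $D := D_1 \cup D_2$ is a $(k_1+k_2)$-fold circuit and its principal partition $\cA$ is simply $\cA_1 \cup \cA_2$, where $\cA_i = \{A_1^i,\ldots,A_{\ell_i}^i\}$ is the principal partition of $D_i$. Writing $\ell = \ell_1+\ell_2$ and $k=k_1+k_2$, Theorem~\ref{thm:strong} already gives $r(F) \le \ell-k$ for $F := \bigcap_{i,j} \cl(D \setminus A_j^i)$, so it suffices to prove the reverse inequality.

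Setting $F_i := \bigcap_{j=1}^{\ell_i} \cl(D_i \setminus A_j^i)$, the hypothesis that each $D_i$ is balanced reads $r(F_i) = \ell_i-k_i$. I would first observe that $F \supseteq F_1 \cup F_2$: for each $i$, the containment $D_i \setminus A_j^i \subseteq D \setminus A_j^i$ gives $F_i \subseteq \bigcap_j \cl(D \setminus A_j^i)$, while $F_i \subseteq \cl(D_i) \subseteq \cl(D \setminus A_{j'}^{3-i})$ for every $j'$ handles the other factor. After this monotonicity step the problem reduces to proving $r(F_1 \cup F_2) \ge r(F_1) + r(F_2)$.

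The crux of the proof, and the main obstacle, is to show this rank additivity holds for $F_1$ and $F_2$, which are not themselves subsets of $D_1$ and $D_2$. The direct sum hypothesis gives $r(D_1 \cup D_2) = r(D_1)+r(D_2)$, and Lemma~\ref{lem:elemalt} upgrades this to $r(\cl(D_1) \cup \cl(D_2)) = r(\cl(D_1))+r(\cl(D_2))$. Submodularity then forces $r(\cl(D_1) \cap \cl(D_2)) \le 0$, so this intersection contains only loops of $\cM$. Taking bases $B_i$ of $F_i$ and extending to bases $I_i$ of $\cl(D_i)$, the loop-freeness of the $I_i$ combined with $I_1 \cap I_2 \subseteq \cl(D_1) \cap \cl(D_2)$ forces $I_1$ and $I_2$ to be disjoint. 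Their union then has size $r(\cl(D_1)) + r(\cl(D_2)) = r(\cl(D_1) \cup \cl(D_2))$, and since it spans this set it must be independent. This makes $B_1 \cup B_2 \subseteq I_1 \cup I_2$ independent, so $r(F_1 \cup F_2) \ge |B_1|+|B_2| = r(F_1)+r(F_2) = \ell-k$. Combining this lower bound with Theorem~\ref{thm:strong} yields $r(F) = \ell-k$, which is exactly the statement that $D$ is balanced.
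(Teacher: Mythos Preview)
Your proof is correct and follows essentially the same route as the paper: reduce to the lower bound via Theorem~\ref{thm:strong} and Proposition~\ref{prop:decompose}, show $F_1\cup F_2\subseteq F$ by monotonicity of closure, and then establish rank additivity $r(F_1\cup F_2)\geq r(F_1)+r(F_2)$ using Lemma~\ref{lem:elemalt}. The only difference is cosmetic: where the paper asserts in one line that $r(\cl(D_1)\cup\cl(D_2))=r(\cl(D_1))+r(\cl(D_2))$ forces any pair of independent subsets of $\cl(D_1)$ and $\cl(D_2)$ to have independent union, you spell this out explicitly via the loop argument and basis extension, which is a perfectly valid (and arguably clearer) unpacking of the same fact.
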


\begin{proof} 
Let $D=D_1\cup D_2$, $k=k_1+k_2$ and let $\cA_i=\{A^i_1,A^i_2,\ldots,A^i_{\ell_i}\}$ be the principal partition of $D_i$ for $i=1, 2$. Then $D$  is a  $k$-fold circuit in $\cM$ and $\cA_1\cup \cA_2$ is the principal partition of $D$ by Proposition \ref{prop:decompose}.
Since each $D_i$ is balanced, we have $r(\bigcap_{j=1}^{\ell_i}\cl(D_i\setminus A^i_j))= \ell_i-k_i$ for both $i=1, 2$.  

Let  $F=\left(\bigcap_{j=1}^{\ell_1}\cl(D\setminus A^1_j)\right)\cap \left(\bigcap_{j=1}^{\ell_2}\cl(D\setminus A^2_j)\right)$, $F_1=\bigcap_{j=1}^{\ell_1}\cl(D_1\setminus A^1_j)$ and $F_2=\bigcap_{j=1}^{\ell_2}\cl(D_2\setminus A^2_j)$.

\begin{claim} \label{clm:discon1alt}  
$\cl(F_1\cup F_2)\subseteq F$. 
\end{claim}

\begin{proof}[Proof of Claim \ref{clm:discon1alt}]  Since $F$ is closed, it will suffice to show that $F_1\cup F_2\subseteq F$. Suppose $e\in F_1\cup F_2$. By symmetry we may assume that $e\in F_1$. Then $e\in \cl(D_1\setminus A^1_j)$ for all $1\leq i\leq \ell_1$. This implies that $e\in \cl(D\setminus A^1_j)$ for all $1\leq j\leq \ell_1$ since $D_1\subset D$, and $e\in \cl(D\setminus A^2_j)$ for all $1\leq j\leq \ell_2$ since $D_1\subset D\setminus A^2_j$. Hence $e\in F$ and we have $F_1\cup F_2\subseteq F$. This gives $\cl(F_1\cup F_2)\subseteq \cl(F)=F$.
\end{proof}

\begin{claim} \label{clm:discon2alt}  
$r(\cl(F_1\cup F_2))=r(F_1)+ r(F_2)$.
\end{claim}

\begin{proof}[Proof of Claim \ref{clm:discon2alt}]
Lemma \ref{lem:elemalt} implies that $r(\cl(D_1)\cup \cl(D_2))=r(\cl(D_1))+r(\cl(D_2))$.
 This tells us that the union of any two independent subsets of $\cl(D_1)$ and $\cl(D_2)$ is independent in $\cM$. Since $F_1\subseteq \cl(D_1)$ and  $F_2\subseteq \cl(D_2)$ this gives $r(F_1\cup F_2)=r(F_1)+ r(F_2)$. The claim now follows since $r(\cl(F_1\cup F_2))=r(F_1\cup F_2)$.
\end{proof}

Claims \ref{clm:discon1alt} and \ref{clm:discon2alt} immediately give
\begin{align*}
    r\left(\left(\bigcap_{j=1}^{\ell_1}\cl(D\setminus A^1_j)\right)\cap \left(\bigcap_{j=1}^{\ell_2}\cl(D\setminus A^2_j)\right)\right)
&\geq r\left(\bigcap_{j=1}^{\ell_1}\cl(D_1\setminus A^1_j)\right)+r\left(\bigcap_{j=1}^{\ell_2}\cl(D_2\setminus A^2_j)\right)\\
&= (\ell_1-k_1)+(\ell_2-k_2)
=\ell_1+\ell_2-k
\end{align*}
by Theorem \ref{prop:decompose}. We can combine this inequality with Theorem \ref{thm:strong} to deduce that $D$ is balanced.
\end{proof}

\begin{remark}
The converse of Lemma \ref{lem:disconalt} is false. Let $\cM$ be the cycle matroid of the graph $G=(V,E)$ with $V=\{u,x_1,x_2,x_3,v,y_1,y_2,w\}$ and $E=\{ux_1,x_1v,ux_2,x_2v,ux_3,x_3v,vy_1,y_1w,vy_2,y_2w,vw,uw\}$.
Then $D=E-uw$ is a disconnected 4-fold circuit in $\cM$ and we have $\cM|_D=\cM_{D_1}\oplus\cM|_{D_2}$ where $D_1=\{ux_1,x_1v,ux_2,x_2v,ux_3,x_3v\}$  and $D_2=\{vy_1,y_1w,vy_2,y_2w,vw\}$ are both 2-fold circuits. It is straightforward to check that $D$ and $D_2$ are balanced but $D_1$ is not.
\end{remark}

We can now immediately deduce the following.

\begin{theorem}\label{thm:disconalt} 
Suppose $D$ is a $k$-fold circuit in a matroid $\cM$ such that $\cM|_D = \bigoplus_{i=1}^s \cM|_{D_i}$.
If $D_i$ is a balanced $k_i$-fold circuit for all $1\leq i\leq s$, then $D$ is a balanced $k$-fold circuit.
\end{theorem}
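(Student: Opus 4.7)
The plan is to derive this theorem directly from Lemma \ref{lem:disconalt} by induction on the number of connected components $s$. The base case $s=1$ is trivial (the hypothesis gives $D=D_1$ balanced) and the case $s=2$ is exactly Lemma \ref{lem:disconalt}.

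For the inductive step, suppose $s\geq 3$ and set $D' = D_1\cup D_2\cup\cdots\cup D_{s-1}$. Since $D_1,\ldots,D_s$ are the connected components of $\cM|_D$, the standard direct sum decomposition of a matroid over its components yields $\cM|_D = \cM|_{D'}\oplus \cM|_{D_s}$ and $\cM|_{D'} = \bigoplus_{i=1}^{s-1}\cM|_{D_i}$. By Proposition \ref{prop:decompose}, $D'$ is a $k'$-fold circuit in $\cM$ with $k' = \sum_{i=1}^{s-1} k_i$, and the connected components of $\cM|_{D'}$ are exactly $D_1,\ldots,D_{s-1}$, each of which is balanced by hypothesis. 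The inductive hypothesis then tells us that $D'$ is a balanced $k'$-fold circuit. Applying Lemma \ref{lem:disconalt} to the balanced $k'$-fold circuit $D'$ and the balanced $k_s$-fold circuit $D_s$, together with $\cM|_{D'\cup D_s} = \cM|_{D'}\oplus\cM|_{D_s}$, concludes that $D = D'\cup D_s$ is a balanced $(k'+k_s)$-fold circuit, and $k'+k_s = k$ by Proposition \ref{prop:decompose}.

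There is no serious obstacle here, as the entire content of the theorem is already in the two-component Lemma \ref{lem:disconalt}; all one needs is to peel off one component at a time and invoke the direct-sum behaviour of $k$-fold circuits from Proposition \ref{prop:decompose}. The only point worth flagging is that the induction is on the decomposition of the ambient $k$-fold circuit rather than on individual $k_i$, so one must verify at each step that $D'$ really is a disconnected $k'$-fold circuit with the claimed components — an immediate consequence of $\cM|_{D'}$ being a direct summand of $\cM|_D$ whose own direct summands are the $\cM|_{D_i}$.
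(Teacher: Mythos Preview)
Your proof is correct and follows exactly the paper's own approach: the paper simply states that the result follows by recursively applying Lemma~\ref{lem:disconalt} to the decomposition $\cM|_D=\bigoplus_{i=1}^s \cM|_{D_i}$, and your induction on $s$ is precisely that recursion spelled out in detail.
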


\begin{proof}
This follows by recursively applying Lemma \ref{lem:disconalt} to $\cM|_D=\bigoplus_{i=1}^s \cM|_{D_i}$.    
\end{proof}

\begin{corollary}\label{cor:trivialstrongalt} For all $k\geq 1$, every trivial $k$-fold circuit in a matroid  is balanced.
\end{corollary}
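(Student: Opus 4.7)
The plan is to combine the structural description of trivial $k$-fold circuits given in Lemma \ref{lem:trivialkcircuit} with the direct sum result Theorem \ref{thm:disconalt}. The corollary turns out to be an immediate specialisation, so I anticipate no real obstacle; the substantive work was already absorbed into those two prior results.

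First, I would unpack the hypothesis. Let $D$ be a trivial $k$-fold circuit of $\cM$, so by definition its principal partition has exactly $k$ parts $\{A_1,\dots,A_k\}$. Lemma \ref{lem:trivialkcircuit} then tells us that each $A_i$ is itself a circuit of $\cM$ and that $\cM|_D = \cM|_{A_1}\oplus \cM|_{A_2}\oplus\cdots\oplus \cM|_{A_k}$. Since each circuit $A_i$ is $\cM$-connected (any two of its elements lie together in the circuit $A_i$), the sets $A_1,\dots,A_k$ are exactly the connected components of $\cM|_D$.

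Second, I would record that each $A_i$ is a $1$-fold circuit, and that every $1$-fold circuit is balanced. This was already observed right after the definition of the $k$-fold circuit property: the principal partition of a circuit $C$ is the single class $\{C\}$, and $r(\cl(C\setminus C)) = r(\emptyset) = 0 = 1-1$, so \eqref{eq:dcpkalt} holds with $\ell = k = 1$.

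Finally, I would apply Theorem \ref{thm:disconalt} with $s = k$, $D_i = A_i$ and $k_i = 1$: the connected components of $\cM|_D$ are balanced $1$-fold circuits, so $D$ is a balanced $k$-fold circuit in $\cM$. (The degenerate cases $k=0$, where $D = \emptyset$, and $k=1$, where $D$ is itself a circuit, are handled directly by the second step and need no appeal to Theorem \ref{thm:disconalt}.) This completes the proof.
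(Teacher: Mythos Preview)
Your proof is correct and follows exactly the same approach as the paper: invoke Lemma~\ref{lem:trivialkcircuit} to decompose the trivial $k$-fold circuit as a direct sum of circuits, note that every $1$-fold circuit is balanced, and then apply Theorem~\ref{thm:disconalt}. Your added remark handling the degenerate case $k=1$ separately (where $D$ is connected and Theorem~\ref{thm:disconalt} does not literally apply) is a nice touch.
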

\begin{proof}
This follows from Theorem \ref{thm:disconalt}, Lemma \ref{lem:trivialkcircuit} and the fact that all (1-fold) circuits of a matroid are balanced. 
\end{proof}

\section{Pseudomodular matroids}
\label{sec:pseudo}

Corollary~\ref{cor:cyclicflats} tells us that matroids whose lattice of flats is modular satisfy the $k$-fold circuit property.
However, Corollary~\ref{cor:balanced+modular+sublattice} shows this is a stronger condition than required, and that only a subset of the flats need to form modular pairs.
In studying the double circuit property, both \cite{HK} and \cite{DHK} showed the flats from Corollary~\ref{cor:balanced+modular+sublattice} form a modular sublattice when considering double circuits in certain families of `full' matroids.
We will generalise their results to $k$-fold circuits using a slight variant of the techniques developed in \cite{DHK}.

In general, finding a modular sublattice of a lattice is hard.
We next consider a class of lattices in which modular sublattices can be characterised by local conditions.

\begin{definition}
Suppose $\cL$ is a lattice and $r:\cL\to \ZZ$. We say that $r$ is \emph{pseudomodular on $\cL$} if for all $X,Y, Z \in \cL$ which satisfy
    \[
    r(X \vee Z) - r(X) = r(Y \vee Z) - r(Y) = r(X \vee Y \vee Z) - r(X \vee Y) \, ,
    \]
    we have
    \[
    r((X \vee Z) \wedge (Y \vee Z)) - r(X \wedge Y) = r(X \vee Z) - r(X) \, .
    \]
    The lattice $\cL$ is \emph{pseudomodular} if it is geometric and its grading function is pseudomodular. A matroid $\cM$ is  \emph{pseudomodular} if its lattice of flats is pseudomodular.
\end{definition}

Pseudomodularity can be viewed as a weakening of modularity in the following sense.
Modularity states that, for any $X,Y \in \cL$, we have
\[
r(X) - r(X \wedge Y) = r(X \vee Y) - r(Y) \, .
\]
This can informally be thought of as saying that opposite sides of the diamond in Figure~\ref{fig:pseudomodular} have equal length, and  this property holds for any diamond in $\cL$.
We can use a similar informal description of pseudomodularity.
Any choice of $X,Y,Z \in \cL$ generates a `diamond prism' as shown in Figure~\ref{fig:pseudomodular}.
Pseudomodularity states that if the three solid long `prism' edges have the same length, then the dotted edge must also have that length.

\begin{figure}
    \centering
    \includegraphics[width=0.25\linewidth]{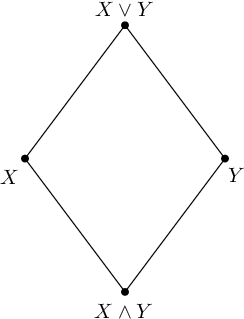} \qquad\qquad
    \includegraphics[width=0.5\linewidth]{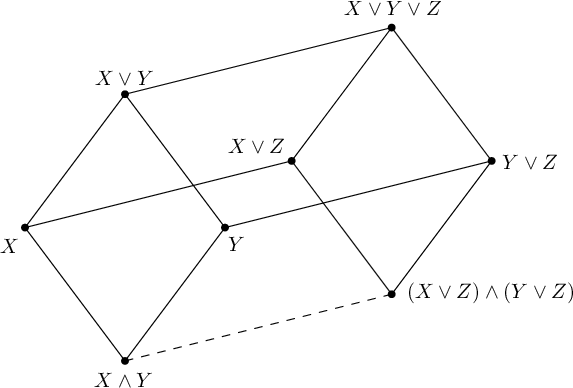}
    \caption{The left figure gives the `diamond' condition for modularity.
    The right figure gives the `diamond prism' condition for pseudomodularity.}
    \label{fig:pseudomodular}
\end{figure}

We will need the following elementary results on pseudomodularity.

\begin{lemma}\label{lem:pseudo}
    Suppose $\cL$ is a lattice and $r,s:\cL\to \ZZ$.
    \begin{enumerate}
    \item[(a)] If $r$ is modular then $r$ is pseudomodular.
    \item[(b)] If $r$ is semimodular and non-decreasing then $r(X \vee Y\vee Z) - r(X\vee Y) \leq r(X \vee Z) - r(X)$ for all  $X,Y,Z\in \cL$.
    \item[(c)] If $r,s$ are both semimodular,  non-decreasing and pseudomodular then $r+s$ is  pseudomodular.
    \end{enumerate}
\end{lemma}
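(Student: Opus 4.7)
The plan is to handle the three parts in sequence, with each part relying on purely routine lattice-theoretic manipulations.

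For part (a), assume $r$ is modular, so $r(A) + r(B) = r(A \vee B) + r(A \wedge B)$ for all $A,B \in \cL$. Apply this identity to the pair $(X \vee Z, Y \vee Z)$, noting that $(X \vee Z) \vee (Y \vee Z) = X \vee Y \vee Z$, to obtain $r((X \vee Z) \wedge (Y \vee Z)) = r(X \vee Z) + r(Y \vee Z) - r(X \vee Y \vee Z)$. Apply modularity also to $(X,Y)$ to get $r(X \wedge Y) = r(X) + r(Y) - r(X \vee Y)$. Subtracting and regrouping gives
\[
r((X \vee Z) \wedge (Y \vee Z)) - r(X \wedge Y) = [r(X \vee Z) - r(X)] + [r(Y \vee Z) - r(Y)] - [r(X \vee Y \vee Z) - r(X \vee Y)],
\]
and since by hypothesis all three bracketed terms are equal, the right-hand side collapses to $r(X \vee Z) - r(X)$, as required.

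For part (b), apply the semimodular inequality \eqref{eq:semimodular} to $A = X \vee Y$ and $B = X \vee Z$. Since $A \vee B = X \vee Y \vee Z$ and $A \wedge B \geq X$, monotonicity gives $r(A \wedge B) \geq r(X)$, so
\[
r(X \vee Y) + r(X \vee Z) \geq r(X \vee Y \vee Z) + r(X),
\]
which rearranges to the claimed inequality.

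Part (c) is the main obstacle and uses (b) crucially. Suppose that
\[
(r+s)(X \vee Z) - (r+s)(X) = (r+s)(Y \vee Z) - (r+s)(Y) = (r+s)(X \vee Y \vee Z) - (r+s)(X \vee Y).
\]
By part (b) applied to $r$ with the triples $(X,Y,Z)$ and $(Y,X,Z)$, we have $r(X \vee Y \vee Z) - r(X \vee Y) \leq r(X \vee Z) - r(X)$ and $r(X \vee Y \vee Z) - r(X \vee Y) \leq r(Y \vee Z) - r(Y)$; the analogous inequalities hold for $s$. Adding the $r$- and $s$-inequalities shows that the $(r+s)$-difference $(r+s)(X\vee Y\vee Z) - (r+s)(X\vee Y)$ is bounded above by both $(r+s)(X \vee Z) - (r+s)(X)$ and $(r+s)(Y \vee Z) - (r+s)(Y)$. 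Combined with the equality hypothesis, this forces each of the individual $r$-inequalities and $s$-inequalities to hold with equality. In particular, both $r$ and $s$ individually satisfy the hypothesis of the pseudomodularity condition at $(X,Y,Z)$. Applying the pseudomodularity of $r$ and of $s$ separately and adding the two resulting identities yields
\[
(r+s)((X \vee Z) \wedge (Y \vee Z)) - (r+s)(X \wedge Y) = (r+s)(X \vee Z) - (r+s)(X),
\]
which is the pseudomodularity of $r+s$. The key subtlety is recognising that part (b) upgrades a single equality for $r+s$ into a pair of equalities for $r$ and $s$ individually, thereby reducing (c) to applying the given pseudomodularity of each summand.
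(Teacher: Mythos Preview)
Your proof is correct and follows essentially the same approach as the paper. For (b) and (c) your arguments are virtually identical to the paper's; for (a) your presentation is a slightly more symmetric variant (applying modularity to the pairs $(X\vee Z,\,Y\vee Z)$ and $(X,Y)$ rather than to $(X,Y)$ and $(X\vee Z,\,Y\vee Z)$ via the intermediate equality $r(X\vee Y)-r(X)=r(X\vee Y\vee Z)-r(X\vee Z)$), but the underlying idea is the same.
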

\begin{proof} (a) Let $X,Y,Z \in \cL$ be such that 
\[
r(X \vee Z) - r(X) = r(Y \vee Z) - r(Y) = r(X \vee Y \vee Z) - r(X \vee Y) \, .
\] 
In particular, we note $r(X \vee Y) - r(X) = r(X \vee Y \vee Z) - r(X \vee Z)$.
From this and the modularity of $r$, we deduce that
\[
r(Y) - r(X \wedge Y) = r(X \vee Y) - r(X) = r(X\vee Y \vee Z) - r(X \vee Z) = r(Y \vee Z) - r((X \vee Z) \wedge (Y \vee Z)) \, .
\]
Hence $r((X \vee Z) \wedge (Y \vee Z)) - r(X \wedge Y) = r(Y \vee Z) - r(Y)$, and $r$ is pseudomodular.

(b) Since $r$ is semimodular and non-decreasing, we have 
$$r(X \vee Y) + r(X\vee Z)\geq r(X \vee Y\vee Z) + r((X\vee Y)\wedge (X\vee Z))\geq r(X \vee Y\vee Z) + r(X).$$
 This gives (b).

(c) Let $t=r+s$ and suppose that 
\[
    t(X \vee Z) - t(X) = t(Y \vee Z) - t(Y) = t(X \vee Y \vee Z) - t(X \vee Y) \, ,
\]
for some $X,Y,Z\in \cL$. Then
\[
    (r(X \vee Z) - r(X))+(s(X \vee Z) - s(X)) =   (r(X \vee Y \vee Z)) - r(X \vee Y))+(s(X \vee Y \vee Z) - s(X \vee Y)) \, .
\]
We can now apply (b) to deduce that $r(X \vee Z) - r(X)=r(X \vee Y \vee Z) - r(X \vee Y)$ and $s(X \vee Z) - s(X)=s(X \vee Y \vee Z)) - s(X \vee Y)$.
The symmetry between $X$ and $Y$ also gives $r(Y \vee Z) - r(Y)=r(X \vee Y \vee Z) - r(X \vee Y)$ and $s(Y \vee Z) - s(Y)=s(X \vee Y \vee Z) - s(X \vee Y)$. We can now use the pseudomodularity of $s,t$ to deduce that $r((X \vee Z) \wedge (Y \vee Z)) - r(X \wedge Y) = r(X \vee Z) - r(X)$ and $s((X \vee Z) \wedge (Y \vee Z)) - s(X \wedge Y) = s(X \vee Z) - s(X)$. Hence $t((X \vee Z) \wedge (Y \vee Z)) - t(X \wedge Y) = t(X \vee Z) - t(X)$. This completes the proof of (c).
\end{proof}

We will also need the following variant of \cite[Theorem 2.1]{DHK}.
{Its proof is very similar, but we include it for completeness.}
\begin{theorem}
\label{thm:modular+substructure}
    Let $\cL$ be a geometric lattice with grading $r$ and $(\cK, \rho)$ be a geometric modular lattice endowed with a non-decreasing modular function $\rho \colon \cK \rightarrow \ZZ_{\geq 0}$ (possibly distinct from its grading). Suppose $r$ is pseudomodular on a  sublattice $\cL'$ of $\cL$ and $\phi \colon \cK \rightarrow \cL'$ is such that
    \begin{enumerate}[label=(\roman*)]
        \item $\phi$ is $\wedge$-preserving, \label{eq:modular+i}
        \item $\phi(X \vee Y) = \phi(X) \vee \phi(Y)$ whenever $X,Y\in \cK$ and $X \vee Y = 1_\cK$, and \label{eq:modular+ii}
        \item $\rho(Y) = r(\phi(Y))$ whenever $Y\in \cK$ and  $[Y, 1_\cK]$ is an interval of length at most two. \label{eq:modular+iii}
    \end{enumerate}
    Then 
    \begin{enumerate}
    \item[(a)] $\phi$ is a lattice homomorphism which satisfies $\rho(X) = r(\phi(X))$ for all $X\in \cK$, and
   \item[(b)] $\phi(\cK)$ is a modular sublattice of $\cL$.
\end{enumerate}
\end{theorem}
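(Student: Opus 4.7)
The plan is to reduce Part (b) to Part (a), and then prove Part (a) by an induction on corank in $\cK$ in which the pseudomodularity of $r$ plays the pivotal role in the inductive step.

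Part (b) follows easily from Part (a). Writing $U = \phi(X)$ and $V = \phi(Y)$ for $U, V \in \phi(\cK)$, the fact that $\phi$ is a lattice homomorphism gives $U \wedge V = \phi(X \wedge Y)$ and $U \vee V = \phi(X \vee Y)$, so $\phi(\cK)$ is a sublattice of $\cL$. Combining rank preservation with the modularity of $\rho$ on $\cK$ yields
\[
r(U) + r(V) = \rho(X) + \rho(Y) = \rho(X \vee Y) + \rho(X \wedge Y) = r(U \vee V) + r(U \wedge V),
\]
so $(U, V)$ is a modular pair in $\cL$ for every pair in $\phi(\cK)$.

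For Part (a) I would induct on the corank $n := r_\cK(1_\cK) - r_\cK(X)$, simultaneously proving rank preservation $\rho(X) = r(\phi(X))$ and join preservation for $\phi$. The base cases $n \leq 2$ follow from condition (iii), with (ii) handling joins that equal $1_\cK$. In the inductive step for $X$ at corank $n \geq 3$, I would use the modularity of $\cK$ to pick $Y, Z$ complementary in the interval $[X, 1_\cK]$, so that $Y \vee Z = 1_\cK$, $Y \wedge Z = X$, with both $Y, Z$ of corank strictly less than $n$. Conditions (i), (ii) and the inductive hypothesis then give $\phi(X) = \phi(Y) \wedge \phi(Z)$, $\phi(1_\cK) = \phi(Y) \vee \phi(Z)$, and rank agreement on $Y, Z, 1_\cK$. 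Modularity of $\rho$ yields
\[
\rho(X) = \rho(Y) + \rho(Z) - \rho(1_\cK) = r(\phi(Y)) + r(\phi(Z)) - r(\phi(Y) \vee \phi(Z)),
\]
and semimodularity of $r$ on $\cL$ immediately gives $\rho(X) \geq r(\phi(X))$.

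The main obstacle is the reverse inequality $\rho(X) \leq r(\phi(X))$, equivalently the statement that $(\phi(Y), \phi(Z))$ is a modular pair in $\cL$. This is exactly where the pseudomodularity of $r$ on $\cL'$ is essential: the plan is to apply the pseudomodularity condition to a triple $(\phi(Y), \phi(Z), T)$ for a carefully chosen auxiliary $T \in \phi(\cK) \subseteq \cL'$, mirroring the construction in \cite{DHK}. Verifying the three required rank-difference equalities for such a $T$, using the inductive hypothesis together with the modular structure of $\cK$ and conditions (i), (ii), is the most delicate point; the conclusion of pseudomodularity then delivers the modular pair relation, which gives the reverse inequality. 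Once rank preservation is in hand at corank $n$, join preservation $\phi(X_1 \vee X_2) = \phi(X_1) \vee \phi(X_2)$ follows from the sandwiching $\phi(X_1) \vee \phi(X_2) \leq \phi(X_1 \vee X_2)$ together with the equal ranks forced by the modular pair condition, using that $\cL$ is graded.
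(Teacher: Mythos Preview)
Your overall architecture matches the paper's: reduce (b) to (a), induct on the length of $[X,1_\cK]$, and use pseudomodularity with an auxiliary third element to pin down $r(\phi(X))$. But the order in which you propose to carry out the two sub-steps is inverted, and this creates a genuine circularity. To apply pseudomodularity to a triple $(\phi(Y),\phi(Z),T)$ with $T=\phi(W)$, you must evaluate quantities like $r(\phi(Y)\vee\phi(W))$; this is only tractable once you know $\phi(Y)\vee\phi(W)=\phi(Y\vee W)$, i.e.\ once $\vee$-preservation for pairs meeting in $X$ is already available. Your inductive hypothesis gives $\vee$-preservation only for pairs whose meet has corank $<n$, so the auxiliary $W$ you need would have to satisfy $Y\wedge W>X$ and $Z\wedge W>X$ simultaneously, and if in addition you require $Y\vee Z=1_\cK$ (as you do), the pseudomodularity hypotheses degenerate and yield nothing. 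Conversely, your final claim that $\vee$-preservation for arbitrary $X_1,X_2$ with $X_1\wedge X_2=X$ follows from rank preservation at $X$ ``together with the modular pair condition'' is not justified: rank preservation plus semimodularity only gives $r(\phi(X_1)\vee\phi(X_2))\leq \rho(X_1\vee X_2)=r(\phi(X_1\vee X_2))$, which is the wrong direction for the sandwich argument; the modular-pair relation for $(\phi(X_1),\phi(X_2))$ is precisely what remains to be proved.

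The paper resolves this by establishing $\vee$-preservation on $[X,1_\cK]$ \emph{first}, via an argument that uses only semimodularity of $r$ (not pseudomodularity). Concretely, for $Y_1,Y_2$ with $Y_1\wedge Y_2=X$ and $Y_1\vee Y_2<1_\cK$, one picks a complement $Y_3$ of $Y_1\vee Y_2$ in $[X,1_\cK]$; then $Y_1\vee(Y_2\vee Y_3)=1_\cK$, so condition~(ii) gives $\phi(Y_1)\vee\phi(Y_2\vee Y_3)=\phi(1_\cK)$, and a semimodularity computation using the induction hypothesis on $[Y_2,1_\cK]$ forces $r(\phi(Y_1)\vee\phi(Y_2))\geq r(\phi(Y_1\vee Y_2))$. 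Only after this does the paper invoke pseudomodularity, applied to $A=\phi(Y_1)$, $B=\phi(Y_2)$, $C=\phi(Y_3)$ (with $Y_1\wedge Y_2=X$ but $Y_1\vee Y_2\neq 1_\cK$); the already-proved $\vee$-preservation lets one identify $A\vee C=\phi(Y_1\vee Y_3)$ and $(A\vee C)\wedge(B\vee C)=\phi(Y_3)$, after which the three rank-difference equalities reduce to $\rho(Y_3)-\rho(X)$ and the conclusion reads $r(\phi(X))=\rho(X)$. Reordering your two steps along these lines would make the argument go through.
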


\begin{proof}
    We first note that (a) implies (b) since, for any $\phi(X),\phi(Y) \in \phi(\cK)$, (a) will give
    \begin{align*}
    r(\phi(X)) - r(\phi(X) \wedge \phi(Y)) &= r(\phi(X)) - r(\phi(X \wedge Y)) = \rho(X) - \rho(X \wedge Y) 
    = \rho(X \vee Y) - \rho(Y)\\ &= r(\phi(X \vee Y)) - r(\phi(Y))  
    = r(\phi(X) \vee \phi(Y)) - r(\phi(Y)) \, .
    \end{align*}

    To prove (a), we show for any interval of $\cK$ of the form $I = [X,1_\cK]$, the induced map $\phi \colon I \rightarrow \cL'$ is a homomorphism satisfying $\rho(Y) = r(\phi(Y))$ for all $Y \in I$.
    The proof is by induction on $k$, the length of $I$.
    For the base case where $k=2$, the claim is immediate from conditions \ref{eq:modular+i}--\ref{eq:modular+iii}.

    For the general case, let $k \geq 3$.
    We first show that $\phi$ is $\vee$-preserving on $I$, i.e., for any $Y_1, Y_2 \in I$, we have $\phi(Y_1 \vee Y_2) = \phi(Y_1) \vee \phi(Y_2)$.
    We may assume that $Y_1 \wedge Y_2 = X$, else the claim holds by induction by considering the interval $[Y_1 \wedge Y_2, 1_\cK]$.
    If $Y_1 \vee Y_2 = 1_\cK$ then this is simply condition~\ref{eq:modular+ii}, so assume that $Y_1 \vee Y_2 < 1_\cK$.
    Let $Y_3 \in I$ be a \emph{complement} of $Y_1 \vee Y_2$, i.e., an element such that $Y_1 \vee Y_2 \vee Y_3 = 1_\cK$ and $X = (Y_1 \vee Y_2) \wedge Y_3$.
    The existence of such an element follows from \cite[Proposition 2.36]{Aigner} and the hypothesis that $\cK$ is a geometric lattice.

\begin{figure}[ht]
    \centering
    \includegraphics[width=0.45\linewidth]{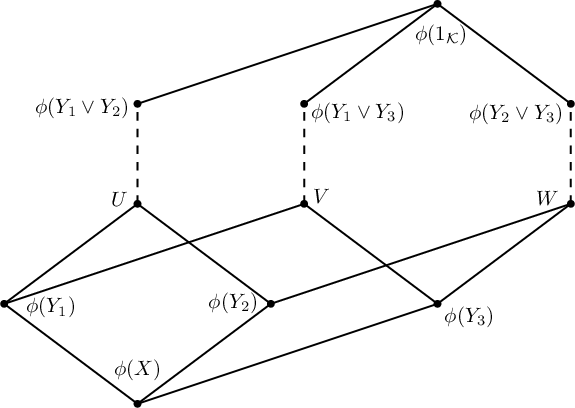}\qquad
    \includegraphics[width=0.45\linewidth]{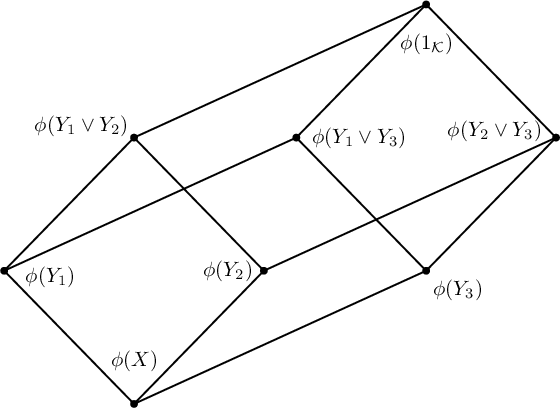}
    \caption{The posets analysed in the proof of Theorem~\ref{thm:modular+substructure}.}
    \label{fig:modular+proof}
\end{figure}
    
    Applying $\phi$ to $Y_1, Y_2, Y_3$ and their joins, we get the diagram given on the left of Figure~\ref{fig:modular+proof}, where $U = \phi(Y_1) \vee \phi(Y_2)$, $V = \phi(Y_1) \vee \phi(Y_3)$, and $W= \phi(Y_2) \vee \phi(Y_3)$.
    A useful observation for later is that by condition~\ref{eq:modular+ii}, we have $\phi(Y_1) \vee \phi(Y_2 \vee Y_3) = \phi(1_\cK)$, and hence
    \begin{equation} \label{eq:U+join}
    U \vee \phi(Y_2 \vee Y_3) = \phi(1_\cK) \, .    
    \end{equation}
    Furthermore, by condition~\ref{eq:modular+i}, $\phi$ is order-preserving and hence
    \begin{equation} \label{eq:phi+vee+hom}
    \phi(Y_1 \vee Y_2) \geq U = \phi(Y_1) \vee \phi(Y_2) \, .
    \end{equation}
    Similar inequalities holds for $V$ and $W$.
    We will show that equality holds in each of these inequalities, and hence we are in the situation of the diagram on the right of Figure~\ref{fig:modular+proof}.
    This will imply, in particular, that $\phi$ is $\vee$-preserving.
    
    Let $s \colon \cK \rightarrow \ZZ$ be the rank function of $\cK$.
    As $\cK$ is modular, $s$ is a modular function and hence:
    \begin{align*}
    s((Y_1 \vee Y_2) \wedge (Y_2 \vee Y_3)) &= s(Y_1 \vee Y_2) + s(Y_2 \vee Y_3) - s(1_\cK) \\
    &= s(Y_1 \vee Y_2) +  s(Y_2) + s(Y_3) - s(X) - s(1_\cK) = s(Y_2) \, .
    \end{align*}
    As $s$ is strictly increasing, it follows that $(Y_1 \vee Y_2) \wedge (Y_2 \vee Y_3) = Y_2$.
    By the induction hypothesis, we know that $\phi$ restricted to $[Y_2, 1_\cK]$ is a lattice homomorphism
    satisfying $\rho(Y') = r(\phi(Y'))$ for all $Y' \in [Y_2, 1_\cK]$.
    We can now use the modularity of $\rho$ to deduce that
    \begin{equation} \label{eq:pseudo+eq+1}
    r(\phi(Y_1 \vee Y_2)) + r(\phi(Y_2 \vee Y_3)) = r(\phi(1_\cK)) + r(\phi(Y_2)) \, .
    \end{equation}
    The semimodularity of $r$, \eqref{eq:U+join} and the fact that $r(U \wedge \phi(Y_2 \vee Y_3)) \geq r(\phi(Y_2))$ now  give
    \begin{align*}
        r(U) &\geq r(U \vee \phi(Y_2 \vee Y_3)) + r(U \wedge \phi(Y_2 \vee Y_3)) - r(\phi(Y_2 \vee Y_3)) \\
             &\geq r(\phi(1_\cK)) + r(\phi(Y_2)) - r(\phi(Y_2 \vee Y_3)) \\
             &= r(\phi(Y_1 \vee Y_2))\, ,
    \end{align*}
    where the final equality follows from \eqref{eq:pseudo+eq+1}.
    As $r$ is strictly increasing, this implies $U \geq \phi(Y_1 \vee Y_2)$.
    Coupled with \eqref{eq:phi+vee+hom}, we get $\phi(Y_1 \vee Y_2) = \phi(Y_1) \vee \phi(Y_2)$.

    It remains to show that $\rho(Y) = r(\phi(Y))$ for all $Y \in I$.
    If $Y \neq X$, this holds by induction on the interval $[Y, 1_\cK]$. Hence it only remains to show that $\rho(X) = r(\phi(X))$.
    Since the length of the interval $I = [X, 1_\cK]$ is at least 3, we can use the argument given in the proof  that $\phi$ is $\vee$-preserving on $I$ to find $Y_1, Y_2, Y_3 \in I \setminus X$ such that $Y_1 \wedge Y_2 = X$ and $Y_3$ is a complement of $Y_1 \vee Y_2$.
    This implies that $Y_1 \vee Y_2 \vee Y_3 = 1_\cK$ and 
    \begin{align*}
    \rho(Y_1) + \rho(Y_2) + \rho(Y_3) = \rho(Y_1 \vee Y_2) + \rho(Y_3) + \rho(X) = \rho(1_\cK) + 2 \rho(X) \, .
    \end{align*} 
    Note that, since $\phi$ is $\vee$-preserving on $I$, we are in the situation illustrated on the right of Figure~\ref{fig:modular+proof}.

    Now let $A = \phi(Y_1)$, $B = \phi(Y_2)$ and $C = \phi(Y_3)$.
By induction, $\phi$ restricted to $[Y_i, 1_\cK]$ is a lattice homomorphism
    satisfying $\rho(Y') = r(\phi(Y'))$ for all $Y' \in [Y_i, 1_\cK]$. Since $\rho$ is modular, this gives
    \[
    r(A \vee C) - r(A) = r(A \vee B \vee C) - r(A \vee B) = r(B \vee C) - r(B) = \rho(Y_3) - \rho(X) \, .
    \]
    As $\cL'$ is pseudomodular, we conclude that $r(\phi(Y_3)) - r(\phi(X))=\rho(Y_3) - \rho(X)$. Since $r(\phi(Y_3))=\rho(Y_3)$ this gives $ r(\phi(X))= \rho(X)$.
\end{proof}

\begin{theorem}\label{thm:pseudomodular+kCP}
    Every pseudomodular matroid has the $k$-fold circuit property for all $k\geq 1$.
\end{theorem}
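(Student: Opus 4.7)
The plan is to reduce to the case of a connected $k$-fold circuit and then apply Theorem~\ref{thm:modular+substructure} to a map from a Boolean lattice into $\cL(\cM)$. By Proposition~\ref{prop:decompose}, a disconnected $k$-fold circuit decomposes as a direct sum of connected $k_i$-fold circuits, and Theorem~\ref{thm:disconalt} reduces its balancedness to that of the components; hence it suffices to treat a connected $k$-fold circuit $D$ of $\cM$ with principal partition $\{A_1,\dots,A_\ell\}$. Put $F_i := \cl(D \setminus A_i)$ and work inside the sublattice $\cL' := [\emptyset,\cl(D)]$ of $\cL(\cM)$, which is the lattice of flats of $\cM|_{\cl(D)}$ and so inherits pseudomodularity from $\cM$.

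Take $\cK := 2^{[\ell]}$, a modular geometric lattice. Define $\phi\colon\cK \to \cL'$ by
\[
\phi(I) := \bigcap_{i \notin I}F_i
\]
with the convention $\phi([\ell]) := \cl(D)$, and the non-decreasing modular function $\rho\colon\cK \to \ZZ_{\geq 0}$ by
\[
\rho(I) := (\ell-k) + \sum_{i \in I}(|A_i|-1).
\]
A short computation yields $\rho([\ell]) = r(D)$, $\rho([\ell]\setminus\{j\}) = r(F_j)$, and $\rho([\ell]\setminus\{i,j\}) = r(D)+2-|A_i|-|A_j|$. Condition (i) of Theorem~\ref{thm:modular+substructure} is immediate from the definition of $\phi$. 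For condition (ii), when $I \cup J = [\ell]$ the index sets $[\ell]\setminus I$ and $[\ell]\setminus J$ are disjoint; using $\phi(I) \supseteq D \setminus \bigcup_{i \notin I}A_i$ and its counterpart for $\phi(J)$ together with the disjointness of the parts of the principal partition, I obtain $\phi(I) \cup \phi(J) \supseteq D$, and hence $\phi(I) \vee \phi(J) = \cl(D) = \phi(I \cup J)$.

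The substantive step is condition (iii), namely $\rho(I) = r(\phi(I))$ for intervals $[I,[\ell]]$ of length at most two. Lengths $0$ and $1$ are direct from the principal partition. For length two I would pass to the dual matroid $(\cM|_D)^*$, which is loopless of rank $k$ with atoms $A_1,\dots,A_\ell$, as shown in the proof of Theorem~\ref{thm:principalpartfork}. Any two distinct atoms in a geometric lattice span a rank-$2$ flat, so $r^*(A_i \cup A_j) = 2$, and the dual rank formula gives $r(D\setminus(A_i\cup A_j)) = r(D)+2-|A_i|-|A_j|$. Combined with the inclusion $D\setminus(A_i\cup A_j) \subseteq F_i \cap F_j$ and the submodular upper bound $r(F_i\cap F_j) \leq r(F_i)+r(F_j)-r(F_i\vee F_j) = r(D)+2-|A_i|-|A_j|$ (using $F_i \vee F_j = \cl(D)$), this forces equality.

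With (i)--(iii) verified and pseudomodularity of $r$ on $\cL'$ supplied by the hypothesis on $\cM$, Theorem~\ref{thm:modular+substructure} lifts the rank identity to $\rho(I) = r(\phi(I))$ for every $I \in \cK$. Setting $I = \emptyset$ yields $r\bigl(\bigcap_{i=1}^\ell F_i\bigr) = \ell - k$, which is precisely the balanced condition for $D$. The main obstacle is arranging the Boolean lattice homomorphism so that Theorem~\ref{thm:modular+substructure} applies; pseudomodularity is invisible in the verifications of (i)--(iii) themselves, but is indispensable for the theorem's conclusion, as non-pseudomodular matroids can satisfy the analogues of (i)--(iii) without being balanced.
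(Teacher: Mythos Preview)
Your proof is correct and follows essentially the same route as the paper: define the Boolean lattice $\cK=2^{[\ell]}$, the map $\phi(I)=\bigcap_{i\notin I}\cl(D\setminus A_i)$, and the modular function $\rho(I)=(\ell-k)+\sum_{i\in I}(|A_i|-1)$, then verify conditions (i)--(iii) of Theorem~\ref{thm:modular+substructure} exactly as you do (including the dual-rank computation for the length-two case) and invoke pseudomodularity to conclude. The only cosmetic differences are that the paper works directly in $\cL=\cL(\cM)$ rather than the interval $[\emptyset,\cl(D)]$, skips your (unnecessary but harmless) reduction to the connected case, and finishes via part~(b) of Theorem~\ref{thm:modular+substructure} plus Corollary~\ref{cor:balanced+modular+sublattice} rather than reading off $\rho(\emptyset)=r(\phi(\emptyset))$ from part~(a) as you do.
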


\begin{proof}
    Let $\cM$ be a pseudomodular matroid, $\cL$ be the lattice of flats of $\cM$ and $D$ be a $k$-fold circuit of $\cM$ with principal partition $\{A_1, \dots, A_\ell\}$.
    Let $\cK$ be the Boolean lattice on all subsets of $L=\{1, \dots, \ell\}$ and define $\rho\colon \cK \rightarrow \ZZ_{\geq 0}$ by
    \[
    \rho(I) = |D| - k - \sum_{i\in L\setminus I} (|A_i| -1) = \sum_{i \in I} (|A_i| -1) + \ell - k \, , \quad I \subseteq L \, .
    \]
    Then $\cK$ is a geometric modular lattice whose grading is given by the cardinality of a subset, and $\rho$ is a non-decreasing modular function.
    Define a map $\phi\colon\cK\rightarrow \cL$ by $\phi(I) = \bigcap_{i\in L\setminus I} \cl(D \setminus A_i)$ for all proper subsets $I \subsetneq L$ and $\phi(L) = \cl(D)$.
    If $\phi$ satisfies conditions \ref{eq:modular+i}-\ref{eq:modular+iii} of Theorem~\ref{thm:modular+substructure}, then the sublattice $\phi(\cK)$ of $\cL$ is modular and  we can apply Corollary~\ref{cor:balanced+modular+sublattice} to deduce that $D$ is balanced.  
    It remains to show that $\phi$ satisfies conditions \ref{eq:modular+i}-\ref{eq:modular+iii} of Theorem~\ref{thm:modular+substructure}.

    To check condition \ref{eq:modular+i}, observe that
    \begin{align*}
        \phi(I \cap J) = \bigcap_{i \notin I \cap J} \cl(D \setminus A_i) = \left(\bigcap_{i \notin I} \cl(D \setminus A_i)\right) \cap \left(\bigcap_{j \notin J} \cl(D \setminus A_j)\right) = \phi(I) \wedge \phi(J) \, .
    \end{align*}
    
    For condition \ref{eq:modular+ii}, let $I \cup J = \{1, \dots, \ell\}$ and note that
    \begin{align*}
    \cl(D) \supseteq \phi(I) \vee \phi(J) &= \left(\bigcap_{i \notin I} \cl(D \setminus A_i)\right) \vee \left(\bigcap_{j \notin J} \cl(D \setminus A_j)\right) \\
    &\supseteq \left(\bigcap_{i \notin I} (D \setminus A_i)\right) \cup \left(\bigcap_{j \notin J} (D \setminus A_j)\right) \\
    &= \left(\bigcup_{i \in I} A_i\right) \cup \left(\bigcup_{j \in J} A_j\right) = D. \\
    \end{align*}
    Since $\phi(I) \vee \phi(J)$ is closed in $\cM$, this gives  $\phi(I) \vee \phi(J)= \cl(D)$.
    
    For condition~\ref{eq:modular+iii}, we have, by the definition of a $k$-fold circuit,
    \begin{align*}
        r(\cl(D)) &= |D| - k = \rho(\{1, \dots, \ell\}) \, , \mbox{ and} \\
        r(\cl(D \setminus A_i)) &= |D \setminus A_i| - (k-1)  = \rho(\{1, \dots, \ell\} \setminus \{i\}) \, .
    \end{align*}
    Let $A_i, A_j$ be distinct parts of the principal partition. By submodularity we have
    \begin{align}
    r(\cl(D \setminus A_i) \cap \cl(D \setminus A_j)) &\leq r(\cl(D \setminus A_i)) + r(\cl(D \setminus A_j)) - r(\cl(D \setminus A_i) \cup \cl(D \setminus A_j)) \nonumber \\ 
    &= |D \setminus A_i| - (k-1) + |D \setminus A_j| - (k-1) - |D| + k \nonumber \\
    &= |D| - k - (|A_i|-1) - (|A_j| -1) \nonumber \\
    &= \rho(\{1, \dots, \ell\} \setminus \{i,j\}) \, . \label{eq:2-intersection-closure}
    \end{align}
    Write $r^* = (r|_D)^*$ for the dual of the rank function restricted to $D$.
    By Lemma~\ref{lem:k-circuit+coflat} and submodularity, we have
    \[
    1 = r^*(A_i) < r^*(A_i \cup A_j)  \leq r^*(A_i) + r^*(A_j) = 2  ,
    \]
    and hence $r^*(A_i \cup A_j) = 2$.
    Then, by the dual rank formula, 
    \begin{align*}
        r(D \setminus (A_i \cup A_j)) &= r(D) - |A_i \cup A_j| + r^*(A_i \cup A_j) 
        \\ 
        &= |D| - k - (|A_i|-1) - (|A_j| -1) \\
        &= \rho(\{1, \dots, \ell\} \setminus \{i,j\}) \, .
    \end{align*}
    Hence $r(\cl(D \setminus A_i) \cap \cl(D \setminus A_j)) \geq \rho(\{1, \dots, \ell\} \setminus \{i,j\})$. Combined with \eqref{eq:2-intersection-closure}, this gives~\ref{eq:modular+iii}.
\end{proof}

We close this section by describing three examples of pseudomodular matroids from \cite{DL}.
\begin{itemize}
\item
The \emph{full cycle matroid} is the cycle matroid of a complete graph $K_n$. 
(This definition is slightly different to that given in \cite{DL}, which specifies that there should be infinitely many edges between each pair of vertices of the complete graph.)
As every graph on at most $n$ vertices can be embedded inside ${K}_n$, every cycle matroid of a graph can be embedded inside a full cycle matroid.

    \item 
Let $\FF$ and $\KK$ be two algebraically closed fields such that $\FF \subset \KK$ and $\KK$ has finite transcendence degree over $\FF$.
The \emph{full algebraic matroid} $\cA(\KK/\FF)$ has ground set $\KK$ whose independent sets are collections of elements that are algebraically independent over $\FF$.
The flats of this matroid are the algebraically closed subfields of $\KK$ that contain $\FF$.
As with linear matroids, any algebraic matroid can be embedded inside a full algebraic matroid, though the process is slightly more involved (see~\cite{DL}).

\item
Let $V$ be a finite set and let $E$ consist of countably infinitely many copies of each subset of $V$.
The \emph{full transversal matroid} has ground set $E$ whose independent sets are the subfamilies $X \subset E$ which have a system of distinct representatives.
Every transversal matroid on $V$ can be realised by a set system contained inside $E$, and hence every transversal matroid can be embedded inside some full transversal matroid.
\end{itemize}

\begin{corollary}\label{cor:full_alg_cycle}
    Full algebraic matroids, full cycle matroids and full transversal matroids satisfy the $k$-fold circuit property.
\end{corollary}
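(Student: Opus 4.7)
The plan is to reduce the corollary to Theorem~\ref{thm:pseudomodular+kCP} by verifying that each of the three families listed is pseudomodular, and then to cite the known results that establish pseudomodularity in each case. Since Theorem~\ref{thm:pseudomodular+kCP} states that every pseudomodular matroid has the $k$-fold circuit property for all $k \geq 1$, the entire corollary will follow as soon as we confirm pseudomodularity of the lattice of flats for each family.

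First I would address full algebraic matroids. The lattice of flats of $\cA(\KK/\FF)$ is the lattice of algebraically closed intermediate fields between $\FF$ and $\KK$, and the classical result of Dress and Lov\'asz \cite{DL}, building on Bj\"orner and Lov\'asz \cite{BL}, shows this lattice is pseudomodular. Hence Theorem~\ref{thm:pseudomodular+kCP} applies directly. Next I would treat full cycle matroids: their lattice of flats corresponds to the lattice of partitions of the vertex set of $K_n$ (equivalently, the partition lattice $\Pi_n$), which is a well-known example of a pseudomodular (in fact, geometric and supersolvable) lattice; pseudomodularity of $\Pi_n$ is established in \cite{BL}. Finally, for full transversal matroids, pseudomodularity of the associated lattice of flats is again a result due to Bj\"orner and Lov\'asz \cite{BL}.

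With pseudomodularity in hand for each family, the corollary is immediate from Theorem~\ref{thm:pseudomodular+kCP}. The main subtlety, rather than an obstacle, is ensuring that the notion of ``full'' used here matches the setting of the pseudomodularity results cited; for full algebraic matroids this requires the hypothesis that $\KK$ has finite transcendence degree over $\FF$, while for full cycle and transversal matroids the sufficient ``largeness'' of the ambient ground set plays a similar role. Once these conventions are aligned, no further argument is needed and the proof reduces to a direct appeal to Theorem~\ref{thm:pseudomodular+kCP}.
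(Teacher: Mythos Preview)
Your approach is essentially the same as the paper's: reduce to Theorem~\ref{thm:pseudomodular+kCP} by citing known results that each of the three families is pseudomodular. The only difference is in the citation chain: the paper invokes \cite{DL} for the fact that all three families satisfy the weak series reduction property and then \cite{Tan} for the implication ``weak series reduction $\Rightarrow$ pseudomodular,'' whereas you attribute pseudomodularity directly to \cite{BL} (and \cite{DL}). The paper's route is slightly more uniform, since \cite{BL} may not treat full transversal matroids explicitly, but mathematically the two arguments are identical.
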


\begin{proof} 
It was shown in \cite{DL} that these matroids satisfy the weak series reduction property, and in \cite{Tan} that the weak series reduction property implies pseudomodularity. 
\end{proof}

Note that the proof in \cite{DL} that full cycle matroids satisfy the series reduction property holds for our modified definition of these matroids. 

\begin{remark}
In light of Theorem \ref{thm:pseudomodular+kCP}, one may wonder whether having the $k$-fold circuit property for all $k \geq 1$ is equivalent to being pseudomodular.
It turns out that the former is a strictly weaker property, as \cite{HK} give an example of a rank 5 matroid on 8 elements that is not pseudomodular but does satisfy the double circuit property.
It is straightforward to verify from their analysis that its lone $3$-fold circuit is balanced, hence this matroid satisfies the $3$-fold circuit property also.
As it has no $k$-fold circuits for $k >3$, this is trivially satisfied.
\end{remark}

\section{\boldmath The $k$-fold circuit property in count matroids}
\label{sec:countM}
    
    Makai~\cite{Mak} showed that the double circuit property holds for the count matroid $\cM(a,b)$ for all $0\leq b\leq 2a-1$. We will show that a $k$-fold circuit of $\cM(a,b)$ is balanced when all of its $(k-1)$-fold circuits are $\cM(a,b)$-rigid; see Proposition \ref{prop:balanced+Mab} below. Combined with Lemma \ref{lem:abclique}(b), this will give a relatively short proof of Makai's result. We will then give a partial extension of Makai's result by showing that $\cM(a,b)$ has the $k$-fold circuit property for all $k\geq 1$ whenever $0\leq b\leq a$.

\begin{lemma} \label{lem:count+kcircuit+rigidity}
    Let $G=(V,E)$ be a $k$-fold $\cM(a,b)$-circuit. Suppose that either $G$ is $\cM(a,b)$-connected or $b=0$. Then $G$ is $\cM(a,b)$-rigid.
\end{lemma}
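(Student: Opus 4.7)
The plan is to handle the two cases separately. For the case where $G$ is $\cM(a,b)$-connected, I would apply Theorem \ref{thm:kcircuitears} to obtain an ear decomposition $C_1, \dots, C_k$ of $\cM(a,b)|_E$ into $k$ ears, and set $D_i = C_1 \cup \dots \cup C_i$ for each $i$ as in Lemma \ref{lem:ear}. I would then argue by induction on $i$ that each $D_i$ is $\cM(a,b)$-rigid, with the base case $D_1 = C_1$ following immediately from Lemma \ref{lem:abclique}(b). The final step $i = k$ gives the desired conclusion, since $D_k = G$.

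For the inductive step, assume $D_{i-1}$ is rigid, so $r_{a,b}(D_{i-1}) = a|V(D_{i-1})| - b$, and recall from Lemma \ref{lem:ear}(3) that $r_{a,b}(D_i) = r_{a,b}(D_{i-1}) + |\widetilde{C}_i| - 1$. The sparsity condition defining $\cM(a,b)$ immediately yields the upper bound $r_{a,b}(D_i) \leq a|V(D_i)| - b$. For the matching lower bound, I would exploit the fact that $C_i \cap D_{i-1}$ is a non-empty (by (E1)) proper (by (E2)) subset of the circuit $C_i$, and hence $\cM(a,b)$-independent, giving
\[
|C_i \cap D_{i-1}| \leq a|V(C_i) \cap V(D_{i-1})| - b.
\]
Subtracting this from $|C_i| = a|V(C_i)| - b + 1$ (Lemma \ref{lem:abclique}(b)) yields $|\widetilde{C}_i| \geq a|V(D_i) \setminus V(D_{i-1})| + 1$, which implies $r_{a,b}(D_i) \geq a|V(D_i)| - b$ and completes the induction.

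For the remaining case $b = 0$ without a connectivity assumption, I would decompose $\cM(a,0)|_E$ into its matroid components $E_1, \dots, E_s$. By Proposition \ref{prop:decompose}, each $E_i$ is a $k_i$-fold circuit of $\cM(a,0)$, and each is $\cM(a,0)$-connected by construction, so the previous case applies to give $r_{a,0}(E_i) = a|V(E_i)|$. Summing over $i$ and using the trivial bound $\sum_i |V(E_i)| \geq |V(G)|$ gives $r_{a,0}(E) = \sum_i r_{a,0}(E_i) \geq a|V(G)|$, which combined with the general upper bound $r_{a,0}(E) \leq a|V(G)|$ yields rigidity. The main obstacle here is that this last aggregation genuinely needs $b = 0$: for $b > 0$, each component contributes $-b$ to $\sum r_{a,b}(E_i)$, so vertex sharing among the $E_i$ can leave the total strictly below $a|V(G)| - b$, as already witnessed by two vertex-disjoint triangles in the cycle matroid $\cM(1,1)$.
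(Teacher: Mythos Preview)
Your proof is correct and follows essentially the same approach as the paper: both induct along an ear decomposition, using that each $C_i$ is $\cM(a,b)$-rigid (Lemma~\ref{lem:abclique}(b)) and that $C_i\cap D_{i-1}$ is a proper nonempty subset of the circuit $C_i$ and hence independent. The only cosmetic difference is that the paper phrases the inductive step as a contradiction (assuming $G$ is not rigid forces $|E_1\cap E_2|>a|V_1\cap V_2|-b$, making $G_1\cap G_2$ dependent inside the circuit $G_2$), whereas you compute the matching lower bound on $r_{a,b}(D_i)$ directly; the disconnected $b=0$ case is handled identically.
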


\begin{proof}
    Suppose $G$ is $\cM(a,b)$-connected.
    We will show by induction on $k$ that $r(G) = a|V| - b$.
    The base case $k=1$ follows from Lemma \ref{lem:abclique}(b).

    For the induction step, by Proposition~\ref{prop:kcircuitears} we can write $G = G_1 \cup G_2$ where $G_1 = (V_1,E_1)$ is a $(k-1)$-fold $\cM(a,b)$-circuit and $G_2 = (V_2,E_2)$ is a 1-fold $\cM(a,b)$-circuit with $E_1 \cap E_2 \neq \emptyset$.
    By our induction hypothesis, both $G_1$ and $G_2$ are $\cM(a,b)$-rigid and hence 
    \[
    |E_1| = a|V_1| - b + (k-1) \, , \, |E_2| = a|V_2| - b + 1 \, .
    \]
    Suppose $G$ is not $\cM(a,b)$-rigid. Then $|E| = r(G) + k < a|V| - b + k$.
    This implies that
    \begin{align*}
    |E_1 \cap E_2| &= |E_1| + |E_2| - |E| \\
    &> (a|V_1| - b + (k-1)) + (a|V_2| - b + 1) - (a|V| - b + k) \\
    &= a|V_1 \cap V_2| - b \, .
    \end{align*}
    Hence $G_1 \cap G_2 = (V_1 \cap V_2, E_1 \cap E_2)$ is $\cM(a,b)$-dependent.
    As $G_1 \cap G_2 \subsetneq G_2$, this contradicts the fact that $G_2$ is an $\cM(a,b)$-circuit.

    It remains to consider the case when  $b=0$ and $G$ is not $\cM(a,0)$-connected. Let $G_1,G_2,\ldots,G_s$ be the subgraphs of $G$ induced by the connected components of $\cM(a,0)|_{E(G)}$. Then each $G_i$ is a $\cM(a,0)$-connected, $k_i$-fold circuit for some $k_i$ with $\sum_{i=1}^s k_i=k$ by Proposition~\ref{prop:decompose}, and hence
    each $G_i$ is $\cM(a,0)$-rigid by the first part of the proof.
    This implies that 
    $$r_{a,0}(G) = r_{a,0}(G_1) + \cdots + r_{a,0}(G_s) 
    = a|V_1| + \cdots + a|V_s| = a|V|
    $$
and hence $G$ is $\cM(a,b)$-rigid.
\end{proof}

We now turn to balanced $k$-fold circuits.
Let $G = (V,E)$ be a $k$-fold $\cM(a,b)$-circuit  with principal partition $\cA = \{A_1, \dots, A_\ell\}$.
We can consider the principal partition as a colouring of the edges of $G$.
We say a vertex $x\in V$ is \emph{monochromatic} if all edges of $G$ incident with $x$ belong to the same part $A_i$, and otherwise that $x$ is \emph{technicolour}.

The following result is a useful observation when studying balanced $k$-fold circuits in $\cM(a,b)$.

\begin{lemma}\label{lem:tech}
Let $G=(V,E)$ be a $k$-fold $\cM(a,b)$-circuit with principal partition $\{A_1,A_2,\dots, A_\ell\}$ and let $X$ denote the set of technicolour vertices in $G$.
Then $\sum_{i=1}^\ell |V(A_i)|=(\ell-1)|V|+|X|$ and $X$ is the vertex set of $\bigcap_{i=1}^\ell \cl(G - A_i)$. 
\end{lemma}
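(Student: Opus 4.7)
Plan: Both parts follow from analysing the principal partition as an edge-colouring of $G$, assigning each vertex $v$ a palette $c(v) \subseteq [\ell]$ consisting of the colours of edges incident with $v$; so $v\in X$ iff $|c(v)|\geq 2$. For the first identity (read in the complementary form $\sum_i |V(G-A_i)|$, which is what pairs naturally with the second claim of the lemma) I would double-count pairs $(v,i)$ with $v\in V(G-A_i)$. A monochromatic vertex of colour $j$ satisfies this exactly when $i\neq j$, contributing $\ell-1$, while a technicolour vertex $v\in X$ has edges in at least two colour classes and so lies in $V(G-A_i)$ for every $i$, contributing $\ell$. Summing over $v$ yields $(\ell-1)|V\setminus X|+\ell|X|=(\ell-1)|V|+|X|$.

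For the second claim, the set-level identity $\bigcap_i V(G-A_i)=X$ is immediate from the same palette argument: $v \in V(G-A_i)$ iff some edge at $v$ has colour different from $i$, so $v$ lies in every $V(G-A_i)$ iff $|c(v)|\geq 2$. To upgrade this to an assertion about closures I would first show that $V(\cl(F))=V(F)$ for every $F\subseteq E$ in $\cM(a,b)$: by Lemma~\ref{lem:abclique}(b) any $\cM(a,b)$-circuit has minimum degree $\geq a+1\geq 2$, so an edge $e=uv\in\cl(F)\setminus F$ lies in a circuit $C\subseteq F+e$ in which both $u$ and $v$ meet at least $a$ further edges of $F$, putting them in $V(F)$. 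Combining this with the previous step gives
\[
V\!\left(\bigcap_{i=1}^\ell\cl(G-A_i)\right)\subseteq \bigcap_{i=1}^\ell V(\cl(G-A_i))=\bigcap_{i=1}^\ell V(G-A_i)=X.
\]

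The main obstacle is the reverse inclusion $X\subseteq V(\bigcap_i \cl(G-A_i))$: for each $v\in X$ we need an edge through $v$ that lies in every $\cl(G-A_i)$. The approach I would take is to exploit rigidity. By Theorem~\ref{thm:principalpartfork} each $G-A_i$ is a $(k-1)$-fold circuit; by Proposition~\ref{prop:decompose} combined with Lemma~\ref{lem:count+kcircuit+rigidity} its $\cM(a,b)$-connected components are $\cM(a,b)$-rigid, and then Lemma~\ref{lem:abclique}(c) forces $\cl(G-A_i)$ to be the disjoint union of $(a,b)$-cliques on these vertex sets. The plan is to argue that the technicolour set $X$ lies inside the vertex set of a single such rigid component of each $G-A_i$; once this is done, the $(a,b)$-clique on $X$ is contained in every $\cl(G-A_i)$ and hence in the intersection, furnishing an edge through every $v\in X$. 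The technical heart of the proof is this single-component containment, which depends on how the colour classes $A_i$ interact with the $\cM(a,b)$-connectivity structure of $G$, and which will likely require a careful examination of circuits that pass through a technicolour vertex using edges of two distinct colours.
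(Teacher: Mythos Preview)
Your double-counting argument for $\sum_i |V(G-A_i)| = (\ell-1)|V|+|X|$ is exactly the paper's argument, and you are right that the identity concerns $V(G-A_i)$ rather than $V(A_i)$: this is precisely how it is invoked in Proposition~\ref{prop:balanced+Mab}, where $V_i:=V(G-A_i)$. Your argument that $V(\cl(F))=V(F)$ for any $F\subseteq E$ in $\cM(a,b)$, and hence
\[
\bigcap_{i=1}^\ell V(\cl(G-A_i))\;=\;\bigcap_{i=1}^\ell V(G-A_i)\;=\;X,
\]
is also exactly what the paper establishes, using only that $\cM(a,b)$-circuits have minimum degree at least $a+1\ge 2$.

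The ``main obstacle'' you then set yourself---the inclusion $X\subseteq V\bigl(\bigcap_i\cl(G-A_i)\bigr)$---is not part of the lemma as the paper intends it, and is in fact \emph{false} in general. Take the double circuit of Example~\ref{ex:dc}(b): two copies of $K_4$ in $\cM(2,3)$ sharing a single vertex $w$. Here $\ell=2$, $X=\{w\}$, each $\cl(G-A_i)$ is one of the $K_4$'s, and their intersection is empty; the paper itself records $r_{2,3}\bigl(\bigcap_i\cl(D\setminus A_i)\bigr)=r_{2,3}(\emptyset)=0$ for this very example just after the definition of ``balanced''. Thus $V\bigl(\bigcap_i\cl(G-A_i)\bigr)=\emptyset\neq\{w\}=X$. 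Your own final step---that the $(a,b)$-clique on $X$ furnishes an edge through every $v\in X$---already fails here, since $|X|=1$ and $b>a$ forbids loops. The sentence ``$X$ is the vertex set of $\bigcap_i\cl(G-A_i)$'' should be read as $X=\bigcap_i V(\cl(G-A_i))$; this is all the paper proves and all that Proposition~\ref{prop:balanced+Mab} requires (there the extra rigidity hypothesis gives $\cl(G-A_i)=H[V_i]$, whence $\bigcap_i\cl(G-A_i)=H[\bigcap_i V_i]=H[X]$). With that reading your proof is already complete at the displayed equality above, and the elaborate rigidity/single-component plan is unnecessary.
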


\begin{proof}
For any $(k-1)$-fold $\cM(a,b)$-circuit $G - A_i$ of $G$, we have $X \subseteq V(G - A_i)$.
Moreover, if $v \in V$ is monochromatic with all incident edges belonging to $A_j$, we have $v \notin V(G - A_j)$.
Moreover, since every $\cM(a,b)$-circuit has minimum degree at least $a+1\geq 2$ by Lemma \ref{lem:abclique}(b), we also have $v \notin V(\cl(G- A_j))$.  
This immediately gives the stated equality and also implies that  $X$ is the vertex set of $\bigcap_{i=1}^\ell \cl(G - A_i)$.    
\end{proof}
    
\begin{proposition} \label{prop:balanced+Mab}
    Suppose that $G$ is a $k$-fold $\cM(a,b)$-circuit  and that every $(k-1)$-fold $\cM(a,b)$-circuit in $G$ is $\cM(a,b)$-rigid.
    Then $G$ is a balanced $k$-fold $\cM(a,b)$-circuit.
\end{proposition}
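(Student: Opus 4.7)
The plan is to split the argument based on whether the $k$-fold circuit $G$ is trivial. If $\ell = k$ then $G$ is balanced by Corollary~\ref{cor:trivialstrongalt}, so we may assume $\ell > k$; since $k = 1$ forces $\ell = 1$, this leaves $k \geq 2$ and $\ell \geq 3$.

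The first step is to show that $G$ itself is $\cM(a,b)$-rigid. By the rigidity hypothesis and Lemma~\ref{lem:abclique}(c), each $\cl(G - A_i)$ is the edge set of the $(a,b)$-clique on $V(G-A_i) = V \setminus V_i$, where $V_i$ denotes the set of $A_i$-monochromatic vertices of $G$. Since every vertex of $G$ has positive degree (as $G$ is a union of circuits and every circuit has minimum degree at least $a+1$ by Lemma~\ref{lem:abclique}(b)), each monochromatic vertex has a unique colour and the sets $V_i$ are pairwise disjoint. For any edge or loop of the ambient $(a,b)$-clique on $V$, its endpoints lie in at most two of the sets $V_i$; using $\ell \geq 3$ we may choose an index $i$ avoiding both, showing the edge lies in $\cl(G - A_i) \subseteq \cl(G)$. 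Hence $\cl(G)$ contains the $(a,b)$-clique on $V$, so $r(G) \geq a|V| - b$, and rigidity follows after combining with the universal upper bound $r(G) \leq a|V|-b$.

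The second step is to evaluate the intersection of closures. Since the intersection of $(a,b)$-cliques on subsets of $V$ is the $(a,b)$-clique on the intersection of those subsets, $\bigcap_i \cl(G - A_i)$ is the $(a,b)$-clique on $X := \bigcap_i (V \setminus V_i)$, which coincides with the technicolour set of $G$ via Lemma~\ref{lem:tech}. The rigidity identities $|E| = a|V| - b + k$ (from $G$ rigid) and $|E| - |A_i| = a(|V| - m_i) - b + (k-1)$ (from $G - A_i$ rigid, with $m_i = |V_i|$) combine to give $|A_i| = a m_i + 1$ for each $i$. Summing these equalities and using $\sum_i m_i = |V| - |X|$ yields $a|X| = \ell - k + b$, which is a positive integer in this case, so the rank of $\bigcap_i \cl(G - A_i)$ equals $a|X| - b = \ell - k$, as required.

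The main obstacle is the rigidity step: handling a pair of monochromatic vertices of different colours relies critically on the bound $\ell \geq 3$, which is exactly what non-triviality provides when $k \geq 2$. The trivial case must be treated separately because that bound fails there, but it is already covered by Corollary~\ref{cor:trivialstrongalt}. Everything else in the argument is a clean consequence of the count matroid rank formula and the combinatorial identity for the technicolour set.
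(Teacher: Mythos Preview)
Your proof is correct and takes a genuinely different route from the paper's. The paper case-splits on whether $G$ is $\cM(a,b)$-rigid: in the rigid case it runs the same edge-count you do, while in the non-rigid case it appeals to Lemma~\ref{lem:count+kcircuit+rigidity} to deduce that $G$ is $\cM(a,b)$-disconnected, decomposes $G$ into its connected (hence rigid) components, and then invokes Theorem~\ref{thm:disconalt} to reassemble balancedness. You instead split on triviality and, in the non-trivial case, \emph{prove} that $G$ is rigid directly: since the monochromatic vertex classes $V_i$ are pairwise disjoint and $\ell\geq 3$, any edge of the ambient $(a,b)$-clique on $V$ has endpoints missing some $V_i$, so it lies in $\cl(G-A_i)\subseteq\cl(G)$, whence $\cl(G)$ contains the full clique on $V$ and $G$ is rigid. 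This pigeonhole step is a nice shortcut: it eliminates the need for both Lemma~\ref{lem:count+kcircuit+rigidity} and Theorem~\ref{thm:disconalt}, while the trivial case is handled for free by Corollary~\ref{cor:trivialstrongalt}. After that point the two arguments coincide: both derive $|A_i|=a m_i+1$, sum to get $a|X|-b=\ell-k$, and finish by reading off the rank of the $(a,b)$-clique on $X$ (the paper states this as ``all induced subgraphs of $H$ are $\cM(a,b)$-rigid'', which you are likewise using implicitly when you equate $r\big(\bigcap_i\cl(G-A_i)\big)$ with $a|X|-b$, and also when you pass from $K_V^{a,b}\subseteq\cl(G)$ to $r(G)\geq a|V|-b$).
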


\begin{proof}
Let $H$ be the underlying complete multigraph of $\cM(a,b)$.
We let $\{A_1,\dots,A_\ell\}$ be the principal partition of $G$ and put
$G-A_i=G_i=(V_i,E_i)$ 
for $1\leq i \leq \ell$.
By the hypothesis that every $(k-1)$-fold $\cM(a,b)$-circuit is $\cM(a,b)$-rigid, we have 
$|E_i|=a|V_i|-b+k-1$, and Lemma \ref{lem:abclique}(c) implies the $\cM(a,b)$-closure of $G_i$ is the induced subgraph $H[V_i]$.
Since the set $X$ of technicolour vertices of $G$ is the vertex set of $\bigcap_{i=1}^\ell(\cl(G_i))$ by Lemma~\ref{lem:tech}, this implies that it will suffice to show that
\begin{equation} \label{eq:HX+tight}
\ell-k= r_{a,b}\left(\bigcap_{i=1}^\ell \cl(G_i)\right) = r_{a,b}(H[X]) = \max(a|X| - b, 0) \, ,
\end{equation}
where the final equality follows from the fact that all induced subgraphs of $H$ are $\cM(a,b)$-rigid.

We first consider the case when $G$ is $\cM(a,b)$-rigid. 
The principal partition tells us every edge of $G$ is contained in exactly $\ell-1$, $(k-1)$-fold $\cM(a,b)$-circuits of $G$ and hence $\sum_{i=1}^\ell |E_i| = (\ell -1) |E|$.
Moreover, the assumption that $G$ is a $\cM(a,b)$-rigid $k$-fold circuit implies $|E| = a|V| -b + k$.
Combining these facts gives
$$\sum_{i=1}^\ell (a|V_i| - b + k-1) = \sum_{i=1}^\ell |E_i| = (\ell -1) |E| = (\ell -1)(a|V| - b + k) 
= a(\ell-1)|V| - (\ell -1)(b-k)$$
and hence  $a \sum_{i=1}^\ell |V_i| = a(\ell-1)|V| +b + \ell - k$.
We can now use the fact that $\sum_{i=1}^\ell |V_i| = (\ell -1)|V| + |X|$ by Lemma \ref{lem:tech} to deduce that $a|X| - b = \ell - k$.
This gives \eqref{eq:HX+tight} since $\ell \geq k$ by Proposition \ref{prop:principal+partition+k}.

It remains to consider the case when $G$ is not $\cM(a,b)$-rigid. Then, by Lemma~\ref{lem:count+kcircuit+rigidity}, $G$ is not $\cM(a,b)$-connected and $b \neq 0$.
Let $G_1,G_2,\ldots,G_s$ be the subgraphs of $G$ induced by the connected components of $\cM(a,b)|_{E(G)}$.
By Proposition~\ref{prop:decompose} and Lemma~\ref{lem:count+kcircuit+rigidity}, each $G_i$ is an $\cM(a,b)$-rigid, $k_i$-fold circuit where $k_i \leq k$. Theorem \ref{thm:disconalt} implies that it will suffice to show that each $G_i$ is balanced. This follows by the previous case.
\end{proof}

Since every $\cM(a,b)$-circuit is $\cM(a,b)$-rigid by Lemma \ref{lem:abclique}(b), we immediately deduce Makai's result.

\begin{corollary}
 $\cM(a,b)$ has the double circuit property for all $0\leq b\leq 2a-1$. 
\end{corollary}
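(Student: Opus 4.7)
The plan is to specialise Proposition \ref{prop:balanced+Mab} to the case $k=2$ and verify its hypothesis using the count matroid structure. Let $D$ be an arbitrary double circuit of $\cM(a,b)$; by definition of a $k$-fold circuit with $k=2$, the $(k-1)$-fold circuits contained in $D$ are precisely the ordinary $\cM(a,b)$-circuits contained in $D$.

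The key observation is that the hypothesis of Proposition \ref{prop:balanced+Mab} is automatically satisfied when $k=2$: Lemma \ref{lem:abclique}(b) asserts that every $\cM(a,b)$-circuit $C$ satisfies $r_{a,b}(C) = a|V(C)| - b$, i.e.\ is $\cM(a,b)$-rigid. Consequently, every 1-fold circuit inside $D$ is $\cM(a,b)$-rigid, and Proposition \ref{prop:balanced+Mab} immediately gives that $D$ is balanced.

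Since $D$ was an arbitrary double circuit of $\cM(a,b)$, every double circuit of $\cM(a,b)$ is balanced, which is precisely the statement that $\cM(a,b)$ has the double circuit property. There is no genuine obstacle here, as the corollary is the $k=2$ instance of the much stronger structural statement in Proposition \ref{prop:balanced+Mab}; the role of Lemma \ref{lem:abclique}(b) is simply to bridge the gap by confirming that the rigidity hypothesis trivialises for 1-fold circuits.
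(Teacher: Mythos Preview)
Your proof is correct and follows exactly the same approach as the paper: apply Proposition~\ref{prop:balanced+Mab} with $k=2$, using Lemma~\ref{lem:abclique}(b) to verify that every $1$-fold circuit (i.e.\ ordinary $\cM(a,b)$-circuit) is $\cM(a,b)$-rigid. The only minor remark is that ``balanced'' means equality in \eqref{eq:dcpkrev} while the double circuit property is stated as the inequality \eqref{eq:dcpalt}, but these are equivalent by Theorem~\ref{thm:strong}, as the paper notes.
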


We cannot use Proposition \ref{prop:balanced+Mab} to deduce that $\cM(a,b)$ has the $k$-fold circuit property for all $k$
because a $k$-fold circuit of $\cM(a,b)$ may contain $(k-1)$-fold circuits which are not $\cM(a,b)$-rigid when $k\geq 3$.
The case when $b=0$ is an exception to this since Lemma~\ref{lem:count+kcircuit+rigidity} implies that $k$-fold circuits are $\cM(a,0)$-rigid for all $k\geq 1$. Hence Proposition~\ref{prop:balanced+Mab} has the following immediate corollary.

\begin{corollary}\label{cor:b=0case_kfold}
    $\cM(a,0)$ has the $k$-fold circuit property for all $a,k\geq 1$.
\end{corollary}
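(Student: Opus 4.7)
The plan is to deduce this corollary directly from the two immediately preceding results: Lemma \ref{lem:count+kcircuit+rigidity} (which characterises when $k$-fold circuits in $\cM(a,b)$ are rigid) and Proposition \ref{prop:balanced+Mab} (which gives rigidity of the $(k-1)$-fold circuits as a sufficient condition for a $k$-fold circuit to be balanced). The point is that the $b=0$ hypothesis removes the matroid-connectivity requirement in Lemma \ref{lem:count+kcircuit+rigidity}, so the rigidity condition needed by Proposition \ref{prop:balanced+Mab} becomes automatic.

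In more detail, I would argue as follows. Let $G$ be an arbitrary $k$-fold circuit in $\cM(a,0)$ and let $\{A_1,\ldots,A_\ell\}$ be its principal partition, so that each $G - A_i$ is a $(k-1)$-fold circuit of $\cM(a,0)$ by the definition of the principal partition. Applying Lemma \ref{lem:count+kcircuit+rigidity} to each $G - A_i$ with $b=0$ — using specifically the clause that allows us to drop the $\cM(a,b)$-connectivity hypothesis when $b=0$ — we conclude that every $G - A_i$ is $\cM(a,0)$-rigid. This is precisely the hypothesis of Proposition \ref{prop:balanced+Mab}, which then yields that $G$ is balanced. Since $G$ was an arbitrary $k$-fold circuit, $\cM(a,0)$ has the $k$-fold circuit property.

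There is essentially no obstacle here, since the substantive content has already been established: the nontrivial input is the inductive argument behind Lemma \ref{lem:count+kcircuit+rigidity} (where the absence of the $-b$ term in the sparsity count lets one avoid the disconnected pathology that arises for $b>0$) and the technicolour-vertex counting argument inside Proposition \ref{prop:balanced+Mab}. Beyond invoking these, one need only observe that the corollary's hypotheses are tailored to make both results apply simultaneously with no extra assumptions.
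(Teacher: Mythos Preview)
Your proposal is correct and follows essentially the same approach as the paper: invoke Lemma~\ref{lem:count+kcircuit+rigidity} with $b=0$ to get that every $(k-1)$-fold circuit of $G$ is $\cM(a,0)$-rigid, and then apply Proposition~\ref{prop:balanced+Mab}. The paper states this slightly more tersely (noting that all $k$-fold circuits are $\cM(a,0)$-rigid when $b=0$), but the argument is the same.
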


We next show that $\cM(a,b)$ has the $k$-fold circuit property for all $k\geq 1$ when $1\leq b\leq a$.
We need the following structural result.

\begin{lemma}\label{lem:flats} Suppose  $a,b$ are integers with $1\leq b\leq a$. Then:
\begin{enumerate}
\item[(a)] every $(a,b)$-clique is $\cM(a,b)$-rigid;
\item[(b)] the union of two intersecting $\cM(a,b)$-rigid graphs is $\cM(a,b)$-rigid;
\item[(c)] every cyclic flat in $\cM(a,b)$ is a union of vertex disjoint $(a,b)$-cliques.
\end{enumerate}
\end{lemma}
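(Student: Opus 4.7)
For part~(a), the plan is to induct on the number of vertices $n$. The base case $n=1$ is immediate: $K_1^{a,b}$ consists of $a-b$ loops, and since a single-vertex $\cM(a,b)$-circuit would require $a-b+1$ loops, $K_1^{a,b}$ itself is independent with $a\cdot 1-b$ edges, hence rigid. For the inductive step I would take an independent spanning subgraph $I_{n-1}$ of $K_{n-1}^{a,b}$ of size $a(n-1)-b$ and form $I_n$ by adjoining a vertex $v$ together with $a$ edges incident to $v$ (which exist since $v$ has $(a-b)+(n-1)(2a-b)\geq a$ incident edges in $K_n^{a,b}$ whenever $b\leq a$). Then $I_n$ has $an-b$ edges, and any subgraph $(V',E')$ violating sparsity must contain $v$ (otherwise it would lie in $I_{n-1}$); but then $|E'|\leq a(|V'|-1)-b+\deg_{I_n}(v)=a|V'|-b$, a contradiction. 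Hence $K_n^{a,b}$ is rigid.

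For part~(b), my approach is to apply submodularity of $r_{a,b}$. Given $\cM(a,b)$-rigid graphs $G_1=(V_1,E_1)$ and $G_2=(V_2,E_2)$ with $V_1\cap V_2\neq\emptyset$, submodularity combined with the sparsity bound $r_{a,b}(E_1\cap E_2)\leq a|V_1\cap V_2|-b$ (which is nonnegative because $b\leq a\leq a|V_1\cap V_2|$) gives
\[
r_{a,b}(E_1\cup E_2)\geq r_{a,b}(E_1)+r_{a,b}(E_2)-r_{a,b}(E_1\cap E_2)\geq a|V_1\cup V_2|-b.
\]
Combined with the matching sparsity upper bound, $G_1\cup G_2$ is $\cM(a,b)$-rigid.

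For part~(c), my plan is to decompose a cyclic flat $F$ into its graph components, show each is rigid using part~(b), and then conclude that closedness forces each component to be an $(a,b)$-clique. Concretely, let $F_1,\dots,F_s$ be the edge sets of the graph-connected components of $(V(F),F)$ with pairwise disjoint vertex sets $V_1,\dots,V_s$. Since every $\cM(a,b)$-circuit is graph-connected by Lemma~\ref{lem:abclique}(b), each $F_i$ is itself a union of $\cM(a,b)$-circuits in $F$ and hence cyclic. To establish rigidity of $F_i$, I would construct a sequence of circuits $C_1,\dots,C_t\subseteq F_i$ with $\bigcup_j C_j=F_i$ such that each $C_{j+1}$ shares a vertex with $U_j:=C_1\cup\cdots\cup C_j$: this is possible because $F_i$ is graph-connected and cyclic, so any graph path in $F_i$ from $V(U_j)$ to an uncovered edge has a first edge outside $U_j$ which lies in some circuit meeting $V(U_j)$. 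Inductively applying part~(b) gives rigidity of each $U_j$, so $F_i=U_t$ is rigid. Lemma~\ref{lem:abclique}(c) then identifies $\cl(F_i)$ with the $(a,b)$-clique on $V_i$; since $F$ is closed and contains $F_i$, this clique is contained in $F$; and since all its edges lie within $V_i$, they belong to the graph component $F_i$. Hence $F_i$ is exactly the $(a,b)$-clique on $V_i$.

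The main obstacle lies in part~(c), specifically the circuit-ordering argument showing each graph component of a cyclic flat is rigid; this step hinges on Lemma~\ref{lem:abclique}(b) for the graph-connectedness of circuits and on part~(b) for combining rigid subgraphs. Parts~(a) and~(b) are straightforward consequences of the sparsity definition of $\cM(a,b)$ and submodularity of $r_{a,b}$. The hypothesis $1\leq b\leq a$ is essential in part~(b) to guarantee $a|V_1\cap V_2|-b\geq 0$ whenever $|V_1\cap V_2|\geq 1$, and enters throughout to make the $(a,b)$-clique well-defined and to keep loop counts non-negative.
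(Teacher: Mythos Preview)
Your proof is correct, but parts (b) and (c) take a noticeably different route from the paper.

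For (b), the paper first reduces to the case of two intersecting $(a,b)$-cliques via Lemma~\ref{lem:abclique}(c), and then argues rigidity by inductively attaching vertices of degree $a$ (as in its proof of (a)). Your direct submodularity argument, bounding $r_{a,b}(E_1\cap E_2)\leq a|V_1\cap V_2|-b$, is shorter and avoids the reduction to cliques entirely; the only place the hypothesis $b\leq a$ enters is to guarantee this bound is nonnegative when $|V_1\cap V_2|\geq 1$.

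For (c), the paper works with the $\cM(a,b)$-connected components of the cyclic flat and invokes Lemma~\ref{lem:count+kcircuit+rigidity} (whose proof uses ear decompositions) to get rigidity of each component; it then argues that distinct $\cM(a,b)$-components must be vertex-disjoint. You instead decompose into \emph{graph}-connected components and prove rigidity by hand: greedily attaching circuits that share a vertex with what has been built so far, using only part~(b) and the graph-connectedness of circuits from Lemma~\ref{lem:abclique}(b). This is more elementary and self-contained, since it bypasses the ear-decomposition machinery, at the cost of spelling out the circuit-ordering argument. Both approaches ultimately identify each component with an $(a,b)$-clique via Lemma~\ref{lem:abclique}(c) and the closedness of $F$.

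Part (a) is essentially the same in both.
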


\begin{proof}
(a) Lemma \ref{lem:abclique}(a) implies that the addition of a vertex incident to $a$ edges, including at most $a-b$ loops increases the rank of a graph in $\cM(a,b)$ by $a$. A simple induction based on the fact that the $(a,b)$-clique on one vertex is $\cM(a,b)$-rigid now gives (a).

(b) Since the $\cM(a,b)$-closure of every $\cM(a,b)$-rigid graph is an $\cM(a,b)$-clique by Lemma \ref{lem:abclique}(c), it will suffice to show that the union of two intersecting $\cM(a,b)$-cliques $G_1,G_2$ is $\cM(a,b)$-rigid. This follows by the same argument as in the proof of (a) since we can construct a spanning subgraph of $G_1\cup G_2$ from $G_1$ by recursively adding a vertex incident to $a$ edges, at most $a-b$ of which are loops.

(c) Let $G$ be a graph which is induced by a cyclic flat of $\cM(a,b)$.
Lemma \ref{lem:count+kcircuit+rigidity} and Lemma \ref{lem:abclique}(c) imply that every $\cM(a,b)$-connected component of $G$ is an $(a,b)$-clique. Hence it will suffice to show that the $\cM(a,b)$-connected components of $G$ are vertex disjoint. 
We will verify this by showing that the graph of every $\cM(a,b)$-circuit is connected. Suppose, for a contradiction that $H=(V,E)$ is a disconnected  $\cM(a,b)$-circuit.
Then we can choose two vertex disjoint subgraphs $H_1=(V_1,E_1)$ and $H_2=(V_2,E_2)$ of $H$ such that $H=H_1\cup H_2$. Since $H$ is an $\cM(a,b)$-circuit,   $H_1,H_2$ are $\cM(a,b)$-independent and hence $|E_i|\leq a|V_i|-b$ for $i=1,2$. This gives $|E|\leq a|V|-2b$ which contradicts the fact that $H$ is $\cM(a,b)$-rigid by Lemma \ref{lem:count+kcircuit+rigidity}.
\end{proof}

\begin{theorem}\label{thm:flats}
    $\cM(a,b)$ has the $k$-fold circuit property for all $k\geq 1$ and all $1\leq b\leq a$.
\end{theorem}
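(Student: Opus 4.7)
The strategy is to apply Theorem~\ref{thm:modular+substructure} via the template used in the proof of Theorem~\ref{thm:pseudomodular+kCP}, but rather than invoking global pseudomodularity of $\cM(a,b)$ (which fails for $b\geq 1$), to verify pseudomodularity of $r_{a,b}$ only on the sublattice generated by the specific flats that appear in the argument.

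First I would fix a $k$-fold circuit $G$ of $\cM(a,b)$ with principal partition $\{A_1,\dots,A_\ell\}$, set $F_i=\cl(G\setminus A_i)$, and let $\cL'$ be the sublattice of $\cL(\cM(a,b))$ generated by $F_1,\dots,F_\ell$. Each $G\setminus A_i$ is a cyclic $(k-1)$-fold circuit, so every $F_i$ is a cyclic flat, and Lemma~\ref{lem:flats}(c) decomposes it as a vertex-disjoint union of $(a,b)$-cliques. I would then take the Boolean lattice $\cK=2^{[\ell]}$ with modular function $\rho(I)=\sum_{i\in I}(|A_i|-1)+\ell-k$ and define $\phi\colon\cK\to\cL(\cM(a,b))$ by $\phi(I)=\bigcap_{i\notin I}F_i$ for $I\subsetneq[\ell]$ and $\phi([\ell])=\cl(G)$, exactly as in the proof of Theorem~\ref{thm:pseudomodular+kCP}. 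The verification of conditions (i)--(iii) of Theorem~\ref{thm:modular+substructure} in that proof uses only general matroid principles (submodularity, Proposition~\ref{prop:k-circuit+coflat}, and the dual rank formula), so it carries over unchanged.

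The remaining and technically central step is to show that $r_{a,b}$ is pseudomodular on $\cL'$. Here the additive decomposition $r_{a,b}(F)=a|V(F)|-b\cdot c(F)$, valid when $F$ is a disjoint union of $(a,b)$-cliques with $c(F)$ clique components, is the main tool. Lemma~\ref{lem:flats}(b) shows that joins in $\cL'$ glue overlapping cliques into single larger $(a,b)$-cliques, while meets (which are set intersections of flats) restrict the clique data to a smaller vertex set. I would verify the pseudomodular identity on $X,Y,Z\in\cL'$ directly from these rules: the equal-jump hypothesis translates into combinatorial statements about which $(a,b)$-cliques of $Z$ attach to $X$, $Y$, and $X\vee Y$, and one then computes $r_{a,b}((X\vee Z)\wedge(Y\vee Z))-r_{a,b}(X\wedge Y)$ directly and matches it against $r_{a,b}(X\vee Z)-r_{a,b}(X)$.

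Granting pseudomodularity of $r_{a,b}$ on $\cL'$, Theorem~\ref{thm:modular+substructure} identifies $\phi(\cK)$ as a modular sublattice of $\cL(\cM(a,b))$, and in particular $r_{a,b}\bigl(\bigcap_{i=1}^\ell F_i\bigr)=r_{a,b}(\phi(\emptyset))=\rho(\emptyset)=\ell-k$, so Corollary~\ref{cor:balanced+modular+sublattice} gives that $G$ is balanced. The main obstacle I expect is the pseudomodularity verification on $\cL'$: arbitrary meets of the $F_i$'s need not themselves be cyclic flats, so they may carry ``residue'' edges not belonging to any $(a,b)$-clique, and the combinatorial analysis has to handle the cyclic part and the residue separately.
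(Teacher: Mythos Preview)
Your outline matches the paper's proof almost exactly: isolate a sublattice $\cL'$ containing all the flats $\cl(G\setminus A_i)$, show $r_{a,b}$ is pseudomodular on $\cL'$, then run the Boolean-lattice argument of Theorem~\ref{thm:pseudomodular+kCP} through Theorem~\ref{thm:modular+substructure}. Two remarks on where you diverge.

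First, your worry about ``residue edges'' is unfounded. If $F$ and $F'$ are each vertex-disjoint unions of $(a,b)$-cliques in the underlying $K_n^{a,b}$, then so is $F\cap F'$: the intersection of the $(a,b)$-clique on $U$ with the $(a,b)$-clique on $U'$ is precisely the $(a,b)$-clique on $U\cap U'$, and the resulting pieces are pairwise vertex-disjoint. Together with your own observation that joins stay in this class (via Lemma~\ref{lem:flats}(b) and Lemma~\ref{lem:abclique}(c)), the sublattice generated by the $F_i$ lies entirely inside the set of \emph{all} vertex-disjoint unions of $(a,b)$-cliques. The paper simply takes $\cL'$ to be this larger set from the outset, which sidesteps your concern and also makes $\cL'$ independent of the particular $k$-fold circuit $G$.

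Second, the paper avoids a direct combinatorial verification of pseudomodularity. It observes that $\cL'$ is in natural bijection $\theta$ with the lattice of flats $\cL''$ of the full cycle matroid $\cM(1,1)$ (both being indexed by vertex-disjoint collections of cliques on the same vertex set), and that under this bijection
\[
r_{a,b}(\theta(F)) \;=\; (a-b)\,|V(F)| \;+\; b\,r_{1,1}(F)\,.
\]
Since $F\mapsto |V(F)|$ is modular and $r_{1,1}$ is already known to be pseudomodular on $\cL''$ (Corollary~\ref{cor:full_alg_cycle}), Lemma~\ref{lem:pseudo}(a),(c) yields pseudomodularity of $r_{a,b}$ on $\cL'$ for free. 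Your proposed direct verification via $r_{a,b}(F)=a|V(F)|-b\,c(F)$ would also work---it is exactly this decomposition unwound---but the transfer from $\cM(1,1)$ is cleaner and explains \emph{why} pseudomodularity holds rather than merely checking it.
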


\begin{proof}
 Let $\cL$ be the lattice of flats of $\cM(a,b)$ and $\cL'$ be the set of flats in $\cL$ which are vertex disjoint unions of $(a,b)$-cliques. 
 Then $\cL'$ is a sublattice of $\cL$ since it is closed under the meet and join operations on $\cL$. 
 This follows since, for any $G_1, G_2 \in \cL'$, $G_1 \cap G_2$ is also a vertex disjoint union of $(a,b)$-cliques,
 and Lemma~\ref{lem:flats}(b) implies that $G_1 \cup G_2$ is a vertex disjoint union of $\cM(a,b)$-rigid graphs, so its closure is a vertex disjoint union of $(a,b)$-cliques by Lemma~\ref{lem:abclique}(c).
 
 We next show that $\cL'$ is pseudomodular.
 Consider the lattice of flats $\cL''$ of the full cycle matroid $\cM(1,1)$. It is well known that each flat in $\cL''$ is a vertex disjoint union of cliques and hence there exists a natural bijection $\theta:\cL''\to \cL'$  which maps a union of vertex disjoint cliques onto the corresponding union of vertex disjoint $(a,b)$-cliques. In addition,
 we saw in the proof of Corollary \ref{cor:full_alg_cycle} that the full cycle matroid is pseudomodular and hence $r_{1,1}$ is pseudomodular on $\cL''$. 
 Let $\nu:\cL''\to \ZZ$ be defined by  $\nu(F)= |V(F)|$ for all $F\in\cL''$. Then $\nu$ is modular and hence $(a-b)\nu+br_{1,1}$ is pseudomodular on $\cL''$ by Lemma \ref{lem:pseudo}.
 We will use this to deduce that $r_{a,b}$ is pseudomodular on $\cL'$ by
 showing  that 
 \begin{equation}\label{eq:rank}
 \mbox{$r_{a,b}(\theta (F))=(a-b)\nu(F)+br_{1,1}(F)$ for all $F\in \cL''$.} 
 \end{equation}
    
    Let $F\in \cL''$. Then $F$ is a vertex disjoint union of cliques $F_1, \dots, F_k$ and $\theta(F)$ is the vertex disjoint union of $(a,b)$-cliques $\theta(F_1), \dots, \theta(F_k)$.
Lemmas \ref{lem:abclique}(b) and \ref{lem:flats}(a) now give
    \[
    r_{a,b}(\theta(F)) = \sum_{i=1}^k r_{a,b}(\theta(F_i)) =\sum_{i=1}^k \left(a\nu(\theta(F_i))-b\right) =\sum_{i=1}^k \left((a-b) \nu(F_i) + br_{1,1}(F_i)\right) = (a-b)\nu(F) + br_{1,1}(F) \, .
    \]
Hence \eqref{eq:rank} holds and $r_{a,b}$ is pseudomodular on $\cL'$.

Let $D$ be a $k$-fold circuit in $\cM(a,b)$, $\{A_1,\ldots,A_\ell\}$ be the principal partition of $D$ and $L=\{1,2,\ldots,\ell\}$. Then $\cl(D)\in \cL'$ and $\cl(D\setminus A_i)\in \cL'$ for all $1\leq i\leq \ell$ by Lemma \ref{lem:flats}.
This implies that $\bigcap_{i \in L\setminus I} \cl(D \setminus A_i)\in \cL'$ for all proper subsets $I \subsetneq L$. We can now define $\cK$ and $\phi:\cK\to \cL'$ as in the proof of Theorem \ref{thm:pseudomodular+kCP} and apply Theorem \ref{thm:modular+substructure} to deduce that $\phi(\cK)$ is a modular sublattice of $\cL$. Corollary~\ref{cor:balanced+modular+sublattice} then tells us that $D$ is balanced.
\end{proof}

\section{Concluding remarks}
\label{sec:conclusion}

\noindent 1. It will be challenging to extend the proof technique of Theorem \ref{thm:flats} to the interval $a+1\leq b\leq 2a-1$ as $(a,b)$-cliques do not satisfy the properties of Lemma~\ref{lem:flats} when $a,b$ are in this range.
In particular, the structure of the cyclic flats of $\cM(a,b)$ becomes significantly more complicated.
Even the special case of $\cM(2,3)$-matroids seems to be difficult.
In this case, every cyclic flat  is an edge-disjoint union of cliques of size at least four intersecting in at most one vertex, and no union of two or more cliques is 3-connected.
As a first step, one could try to show the sublattice $\cL'$ of $\cL$ containing all cyclic flats of $\cM(2,3)$ is pseudomodular.
But even if we could do this, it would still not be sufficient to show $\cM(2,3)$ has the $k$-fold circuit property as the intersection of two cyclic flats may not be cyclic: the intersection is an edge-disjoint union of cliques intersecting in at most one vertex, but some of its cliques may have size two or three.
Thus we would have to consider a larger sublattice of $\cL$  than $\cL'$, and it is not obvious how this larger sublattice should be defined. \\

\noindent 2.
 {Rigidity matroids} are a family of matroids arising from discrete geometry which are closely related to count matroids.
In particular, the $d$-dimensional rigidity matroid  is equal to the count matroid $\cM(d,2d-1)$ when $d=1,2$ and hence  satisfies the double circuit property by \cite{Mak}. Characterising  the $d$-dimensional rigidity matroid is an important open problem when $d\geq 3$ and results on the structure of double circuits in these matroids would be useful in making progress on this problem. 
We will investigate this avenue of research in a forthcoming paper~\cite{JNS}. \\

\noindent 3. We have established that many examples of matroids which satisfy the double circuit property, in fact satisfy the $k$-fold circuit property for all $k \geq 1$.
Indeed, we do not know of any example of a matroid that satisfies the double circuit property but not the $k$-fold circuit property for some $k > 2$.
As such, we pose the following:
\begin{question}
    Does every matroid which satisfies the double circuit property, satisfy the $k$-fold circuit property for all $k\geq 1$?
\end{question}
A positive answer to this question would make verifying the $k$-fold circuit property easier, as double circuits have a simpler structure than $k$-fold circuits in general, particularly as $k$-fold circuits can be non-trivial but disconnected. \\

\noindent 4. Dress and Lov\'asz's initial interest in double circuits was to study the matroid matching problem.
We briefly describe a generalisation of this called the \emph{$k$-uniform matroid matching problem}.
Let $\cM=(E,r)$ be a matroid and $\cH$ be a set of rank $k$ flats of $\cM$.
A set of flats $H \subseteq \cH$ is a \emph{$k$-uniform matching} if $r(\bigcup_{p \in H} p) = k|H|$.
The $k$-uniform matroid matching problem is to compute a $k$-uniform matching of $\cH$ of maximum size, which we denote $\nu_k(\cH)$.
(Thus, the usual matroid matching problem is the special case when $k=2$.)
The upper bound on the maximum number of independent lines in projective spaces obtained by Lov\'asz in [12, Part I of the proof of Theorem 2] can be extended
to show that
\begin{equation} \label{eq:k+matching+minmax}
    \nu_k(\cH) \leq \min \left(r(Z)+\sum_{i=1}^t \left \lfloor \frac{r(Z \cup \bigcup_{p \in \cH_i} p) - r(Z)}{k}\right \rfloor \right)
\end{equation}
where the minimum is taken over all flats $Z \subseteq E$ of $\cM$ and for all partitions $\pi = (\cH_1,\cH_2,\dots, \cH_t)$ of $\cH$.

Dress and Lov\'asz \cite{DL} showed that \eqref{eq:k+matching+minmax} holds with equality when $k=2$ and $\cM$ has the double circuit property. It is unlikely, however, that the $k$-fold circuit property implies \eqref{eq:k+matching+minmax} holds with equality when $k\geq 3$, since we can use the NP-hardness of hypergraph matching (see~\cite{GJ}) to show that the  $k$-uniform matroid matching problem is NP-hard for representable matroids when $k\geq 3$. To see this, let $\Delta$ be a 3-uniform hypergraph  with $V(\Delta)=\{v_1,v_2,\dots,v_n\}$,
let $\{\e_1,\e_2,\ldots,\e_n\}$ be the standard basis for $\RR^n$, and  let $\cH=\{\langle\e_i,\e_j,\e_k\rangle : \{v_i,v_j,v_k\}\in E(\Delta)\}$.
Then the maximum size of a matching in $\Delta$ is equal to $\nu_3(\cH)$ over the full linear matroid $L_\RR^n$.

\section*{Acknowledgements}
A.\,N.\ and B.\,S.\ were partially supported by the Engineering and Physical Sciences Research Council grant EP/X036723/1.
The research visit that began this project was funded by the Heilbronn Institute for Mathematical Research small grant `Counting realisations of discrete structures'. For the purpose of open access, the author has applied a Creative Commons Attribution (CC-BY) licence to any Author Accepted Manuscript version arising.







\begin{thebibliography}{99}

\bibitem{Aigner} M. Aigner, Combinatorial Theory, Grundlehren der mathematischen Wissenschaften 234, Springer New York, NY, 1979.

\bibitem{BL} A. Bj\"orner and L. Lov\'asz, Pseudomodular lattices and continuous matroids, \emph{Acta Sci. Math.}, 51 (1987) 295--308.

\bibitem{C&H} C.R. Coullard and L. Hellerstein, Independence and port oracles for matroids, with an application to computational learning theory, \emph{Combinatorica}, 16 (1996) 189--208.

\bibitem{DHK} A. Dress, W. Hochstättler, and W. Kern, Modular substructures in pseudomodular lattices, \emph{Mathematica Scandinavica} (1994) 9--16.

\bibitem{DL} A. Dress and L. Lov\'asz, On some combinatorial properties of algebraic matroids, \emph{Combinatorica}, 7 (1987) 39--48.

\bibitem{Fra} A. Frank, Connections in combinatorial optimization, Oxford Lecture Series in Mathematics and Its Applications, 2011.

\bibitem{GJ} M. R. Garey and D. S. Johnson, Computers and intractability, W. H. Freedman and Co., San Francisco, 1979.

\bibitem{HK} 
W. Hochst\"attler and W. Kern, Matroid matching in pseudomodular lattices, \emph{Combinatorica}, 9 (1989) 145--152.

\bibitem{JNS} J. Hewetson, B. Jackson, A. Nixon and B. Smith, $k$-fold circuits in rigidity matroids (In preparation).

\bibitem{Lov} L. Lov\'{a}sz, Matroid matching and some applications, \emph{Journal of Combinatorial Theory: Series B}, 28 (1980) 208--236.

\bibitem{Lov2} L. Lov\'{a}sz, Selecting independent lines from a family of lines in a space, \emph{Acta Sci. Math.(Szeged)}, 42 (1980) 121--131.

\bibitem{Mak} M. Makai, Matroid matching with Dilworth truncation, \emph{Discrete Mathematics}, 308 (2008) 1394--1404.

\bibitem{MNWW} D. Mayhew, M. Newman, D. Welsh, G. Whittle, On the asymptotic proportion of connected matroids, \emph{European Journal of Combinatorics}, 32 (6) (2011) 882--890.

\bibitem{Oxl} J. Oxley, Matroid Theory, Oxford Science  Publications, The Clarendon Press, Oxford University Press, New York, 1992.

\bibitem{Tan} L. Tan, The weak series reduction property implies pseudomodularity, \emph{European Journal of Combinatorics}. 18 (6) (1997) 713--719.

\bibitem{Tutte} W. T. Tutte, Lectures on matroids, \emph{Journal of Research of the National Bureau of Standards B}, 69 (1965) 1--47.

\end{thebibliography}
\end{document}